\newtheorem{thm}{Theorem}[section]
\newtheorem{prop}[thm]{Proposition}
\newtheorem{cor}[thm]{Corollary}
\newtheorem{lm}[thm]{Lemma}
\newtheorem*{thm*}{Theorem}
\newtheorem*{prop*}{Proposition}
\theoremstyle{definition}
\newtheorem{df}[thm]{Definition}
\newtheorem{ex}[thm]{Example}
\newtheorem{num}[thm]{}
\theoremstyle{remark}
\newtheorem{rem}[thm]{Remark}
\numberwithin{equation}{thm}
\newcommand{\dmgme} {DM_{gm}^{eff}\!(k)}
\newcommand{\dmgm} {DM_{gm}\!(k)}
\newcommand{\dmgmo} {DM_{gm}^{(0)}\!(k)}
\newcommand{\sm}{\mathscr Sm_k}
\newcommand{\smc}{\mathscr Sm^{cor}_k}
\newcommand{\T}{\mathscr T}
\newcommand{\A}{\mathscr A}
\newcommand{\cH}{\mathcal H}
\newcommand{\drap}[1] {\mathcal D \left( #1 \right)}
\newcommand{\Hom}{\operatorname{\mathrm{Hom}}}
\newcommand{\mot}[1] {M\!\left( #1 \right)}
\newcommand{\M}{\mathcal M}
\newcommand{\spec}[1] {\mathrm{Spec}(#1)}
\newcommand{\ilim}[1] { \underset{#1}{\varinjlim} \ }
\newcommand{\pplim}[1] { \underset{#1}{"{\varprojlim}"} \ }
\newcommand{\pprod}[1] { \underset{#1}{"{\prod}"} \ }
\newcommand{\tra}{{}^t}
\newcommand{\K}{\mathcal K}
\newcommand{\pur}[1] { \mathfrak p_{(#1)} }
\newcommand{\dte} { \mathbb A^1_k }
\newcommand{\dtex}[1] { \mathbb A^1_{#1} }
\newcommand{\dtenx}[2] { \mathbb A^{#1}_{#2} }
\newcommand{\GG} { \mathbb G_m }
\newcommand{\PP} { \mathbb P }
\newcommand{\NN} {\mathbb N}
\newcommand{\ZZ} {\mathbb Z}
\newcommand{\nis}{\mathrm{Nis}}
\newcommand{\E}{\mathbb E}
\newcommand{\dtwist}[1]{\!((#1))}
\newcommand{\pro}[1]{\mathrm{pro}\!-\!{#1}}
\begin{document}

\title{Coniveau filtration and mixed motives}

\author{Fr\'ed\'eric D\'eglise}
\address{LAGA\\
CNRS~(UMR 7539)\\
Universit\'e Paris~13\\
\hbox{Avenue~Jean-Baptiste~Cl\'ement}\\
93430 Villetaneuse\\France}
\email{deglise@math.univ-paris13.fr}
\urladdr{http://www.math.univ-paris13.fr/~deglise/}
\thanks{Partially supported by the ANR (grant No. ANR-07-BLAN-042)}

\date{June 2011}

\begin{abstract}
We introduce the motivic coniveau exact couple of a scheme,
 in the framework of mixed motives, whose property is to universally give
 rise to coniveau spectral sequences through realizations.
 The main result is a computation of its differentials
 in terms of residues and transfers of mixed motives,
 with a formula analog to the one defining the Weil divisor of a
 rational function. 
 We then show how to recover and extend classical results of 
 Bloch and Ogus for motivic realizations.
\end{abstract}

\maketitle

\section*{Introduction}

The coniveau filtration is deeply rooted in the main conjectures on algebraic cycles,
 the ones of Hodge and Tate, as stated by Grothendieck in \cite{Gro1} and \cite{Gro2}.
 This filtration was first define on De Rham cohomology by Grothendieck as the abutment of a spectral sequence deduced from the Cousin resolution of coherent sheaves
 (see \cite[footnote (8), p. 356-357]{Gro4}). In fact, as it appears in
 \cite[chap. IV]{Har}, Cousin resolutions can be understood as the lines in the $E_1$-term
 of a suitable sheafified spectral sequence for cohomology with coefficients in a coherent sheaf. This kind of spectral sequence is now called \emph{coniveau spectral sequence} after the landmarking work of Bloch and Ogus \cite{BO}.

As higher algebraic K-theory was rising, Gersten extended the preceding considerations to the case of K-theory ending in the celebrated \emph{Gersten conjecture}.\footnote{Recall 
 this conjecture can be phrased by saying that 
 the Cousin complex of the unramified K-theory sheaf over a regular scheme 
 is a resolution: see \cite[Problem 10, p. 46]{Ger}. Following \cite[Def. p. 238]{Har},
 one also says this sheaf is \emph{Cohen-Macaulay}.}
At that time, Bloch discovered the connection of the coniveau spectral sequence in K-theory
 with algebraic cycles, formulating what is now called, after Quillen,
  \emph{Bloch's formula}.
 In fact this formula is a consequence of the Gersten conjecture,
  as showed by Quillen as a corollary of his proof of the conjecture
  in the equicaracteristic case (\cite[5.19]{quillen}). It is important for us to recall 
  that Quillen deduces this fact from the identification of some of the differentials 
  in the $E_1$-term of the coniveau spectral sequence with the classical divisor class map,
  associating to a rational function its Weil divisor (\cite[proof of 5.14]{quillen}).
The purpose of this article is to extend this computation in the theory of Voevodsky's motives.

Motivated by the circle of ideas around the Gersten conjecture,
 Rost introduced in \cite{Ros} a notion of local system, called \emph{cycle modules},
 which gives a theoretical framework to define a generalized divisor class map
 for certain cycles with coefficients. The primary example of a cycle module
 is the well-known Milnor K-functor $K_*^M$.
 As an illustration,
  we recall the definition of the generalized divisor class map for
  a normal algebraic connected $k$-scheme $X$, $k$ being a field.
 In this case, any codimension $1$ point $x$ of $X$ corresponds 
  to a discrete valuation $v_x$ of the function field $E$ of $X$,
  whose residue field $\kappa(x)$ is the residue field of $x$ in $X$.
 To the valued field $(E,v_x)$ is associated the so called \emph{tame symbol map}
 (cf \cite[2.1]{Milnor}):
$$
\partial_{v_x}:K_*^M(E) \rightarrow K_*^M(\kappa(x)),
$$
an homogeneous morphism of graded abelian groups of degree $-1$
 which in degree $1$ is equal to the valuation $v_x$ itself.
 We define the divisor class map as the following morphism:
$$
d^0_X:K_*^M(E) \xrightarrow{\ \sum_x \partial_{v_x}\ }
 \bigoplus_{x \in X^{(1)}} K_*^M(\kappa(x)).
$$
This is an homogeneous morphism of degree $-1$.
According to what was said before,
 the reader can see that, in degree $1$,
 it is precisely the usual divisor class map.
  
One can extend this formula to arbitrary algebraic $k$-schemes
 by using normalization of schemes and transfers in Milnor K-theory.\footnote{The
 divisor class map in Milnor K-theory was first written down by
 Kato in \cite{Kato}, for arbitrary excellent schemes.} 
 The theory of cycle modules, 
 or rather the intermediary notion of a \emph{cycle premodule}, 
 is an axiomatization of the functorial properties of Milnor K-theory,
 as a functor defined over function fields over $k$,
 which allows to use the same formula.
 In the end, one obtains for a cycle premodule $M$ over $k$,
  an algebraic $k$-scheme $X$
  and an integer $n \geq 0$ a canonical morphism
$$
d^n_{X,M}:\bigoplus_{x \in X^{(n)}} M(\kappa(x))
 \longrightarrow \prod_{y \in X^{(n+1)}} M(\kappa(y))
$$
homogeneous of degree $-1$. 
 Recall the first axiom of a cycle module says 
 this map actually lands in the direct sum over $y \in X^{(n+1)}$.

The bridge between Rost's cycle modules and Voevodsky's motives was built
 in the thesis of the author. We defined two reciprocal equivalences of
 categories between cycle modules and certain graded sheaves obtained 
 by a stabilization process from the homotopy invariant sheaves with transfers 
 of the theory of Voevodsky (see \cite[Th. 3.4]{Deg9}). 
 This can be seen as an elaboration
 on the fact these latter sheaves admit a Gersten resolution\footnote{In
 other words, their restriction to any smooth $k$-scheme are Cohen-Macaulay
 in the sense of \cite[Def. p. 238]{Har}.}
 as proved by Voevodsky (\cite[chap. 3, 4.37]{FSV}).
Of the results of our thesis, we will need only the following one:
\begin{thm*}[\cite{Deg5bis}, th. 6.2.1]
Let $k$ be a perfect field,
 $\dmgm$ be the category of geometrical non effective motives of Voevodsky over $k$
 and $\A$ be a Grothendieck abelian category.

Let $H:\dmgm^{op} \rightarrow \A$ be a cohomological functor\footnote{i.e. sending
 distinguished triangles to long exact sequences.}.
 Then for any couple $(q,n) \in \ZZ^2$, there exists a canonical cycle premodule 
 $\hat H^{q,n}_*$ with coefficients\footnote{Actually,
  Rost in \cite{Ros} defines cycles modules
 and premodules only with coefficients in the category of abelian groups but the
 generalization to arbitrary abelian category is immediate.}
 in $\A$
 such that for any integral $k$-scheme $X$ with function field $E$,
$$
\hat H^{q,n}_r(E)=\ilim{U \subset X} H\big(M(U)(n+r)[q+r]\big)
$$
where the limit runs over non empty smooth open subschemes $U$ of $X$,
 $M(U)$ being the motive associated with $U$.
\end{thm*}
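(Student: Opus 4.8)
The plan is to endow the collection of objects $\hat H^{q,n}_r(E)$, graded by $r \in \ZZ$, with the four data of a cycle premodule --- restriction, corestriction, a $K^M_*$-module structure and residues --- and then to verify Rost's axioms; here $(q,n)$ stays fixed and only $r$ plays the role of the Milnor degree. First I would make the defining formula precise. For an integral $k$-scheme $X$ with function field $E$, the nonempty smooth open subschemes $U \subseteq X$, ordered by reverse inclusion, form a filtered poset, and an inclusion $U' \subseteq U$ induces $\mot{U'} \to \mot{U}$, hence $H(\mot U(n+r)[q+r]) \to H(\mot{U'}(n+r)[q+r])$; since filtered colimits exist and are exact in the Grothendieck category $\A$, the colimit $\hat H^{q,n}_r(E) = \ilim{U \subseteq X} H(\mot U(n+r)[q+r])$ is a well-defined object of $\A$. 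I would then check it depends only on $E$: any two models of $E$ are dominated by a third, and dominant morphisms become isomorphisms after passing to opens, so the indexing system is canonically attached to $E$.

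Next I would construct the structural maps out of the operations of $\dmgm$. A morphism of function fields $\varphi \colon E \to F$ is realized, after shrinking, by a dominant morphism $Y \to X$ of models, giving $\mot Y \to \mot X$; applying $H$ and passing to the colimit yields the restriction $\varphi_* \colon \hat H_r(E) \to \hat H_r(F)$, of degree $0$. When $F/E$ is finite I would instead use the transfers built into $\dmgm$: the transpose of the graph of $Y \to X$ is a finite correspondence from $X$ to $Y$, hence a map $\mot X \to \mot Y$, producing the corestriction $\varphi^\ast \colon \hat H_r(F) \to \hat H_r(E)$, again of degree $0$. For the module structure, a unit $u$ on some $U$ defines a class in $\mor{\dmgm}{\mot U}{\unit(1)[1]}$ via the splitting $\mot{\GG} \cong \unit \oplus \unit(1)[1]$; cupping with it along the diagonal gives $\gamma_u \colon \mot U \to \mot U(1)[1]$, and applying $H$ produces the action of the symbol $\{u\}$, shifting $r$ by one. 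Extending multiplicatively over products of units, the one thing to check is that this descends to $K^M_*(E)$, i.e. that the Steinberg relation holds; this I would deduce from the fact that it already holds in $\mor{\dmgm}{\unit}{\unit(2)[2]} \cong K^M_2$.

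The residues come from localization. Given a divisorial valuation $v$ on $E$, I would choose a model $X$ on which $v$ is the valuation of a codimension-one point $z$; shrinking $X$ and using that $k$ is perfect so that $\overline{\{z\}}$ is generically smooth, I may assume the closed subscheme $Z$ is smooth. Then there is a Gysin triangle $\mot{U \setminus Z} \to \mot U \to \mot Z(1)[2] \to \mot{U \setminus Z}[1]$, the purity isomorphism $M_Z(U) \cong \mot Z(1)[2]$ being exactly where smoothness of $Z$ is used. Applying the cohomological functor $H$ turns it into a long exact sequence in $\A$, whose connecting morphism, twisted by $(n+r)[q+r]$ and pushed to the colimit over opens of $X$ and of $Z$, defines the residue $\partial_v$ between $\hat H_r(E)$ and $\hat H_{r+1}(\kappa(v))$ (the variances being the cohomological ones, matched to Rost's conventions by the evident reindexing).

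It then remains to verify the premodule axioms (R1a--R3e). The functoriality relations for restriction and corestriction (R1) and the compatibilities of the module structure with them (R2) I expect to follow formally from functoriality of $M$, of $H$ and of the colimit, together with the projection formula for the transfers of $\dmgm$. The hard part will be the residue axioms (R3), and above all the behaviour of $\partial_v$ on a product of a unit with a class --- the motivic avatar of the Leibniz rule for the tame symbol. This demands the multiplicativity and the functoriality of the Gysin triangle with respect to the $K^M_*$-action and to transfers, and it ultimately rests on a single normalization: that for a uniformizer $\pi$ the residue of $\gamma_\pi$ is the canonical generator, equivalently that the boundary of the localization triangle $\mot{\GG} \to \mot{\dte} \to \unit(1)[2] \to \mot{\GG}[1]$ of the origin in $\dte$ realizes the class of $K^M_1$. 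Pinning down this computation and propagating it through the Gysin formalism is the crux of the proof; granting it, the remaining parts of R3 follow from the base-change and compatibility properties of localization triangles.
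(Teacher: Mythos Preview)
Your proposal is correct and follows the same underlying construction as the paper, but the paper organizes the argument differently, and the difference is worth noting. Rather than building the four data on $\hat H^{q,n}_*$ directly and then verifying Rost's axioms (R1)--(R3) for this particular functor, the paper factors everything through the category $\dmgmo$ of \emph{generic motives}: pro-objects $M(E)(n)[n]$ in $\dmgm$ indexed by function fields and integers. It is shown (this is the content of \cite[th.~5.1.1]{Deg5bis}) that $\dmgmo$ carries precisely the morphisms you construct --- covariant functoriality for extensions, norm maps for finite extensions, multiplication by symbols, and residues from the Gysin triangle --- and that the relations among them are exactly Rost's premodule axioms. A cycle premodule with values in $\A$ is then \emph{defined} as a functor $\dmgmo^{op}\to\A$, and $\hat H^{q,n}_*$ is obtained in one line as the restriction to $\dmgmo$ of the extension $\bar H(?(n)[q])$ of $H$ to pro-objects.

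What this buys is that the delicate verifications you flag --- the Steinberg relation, the normalization $\partial_v(\pi)=1$, the Leibniz-type compatibilities of (R3) --- are done once and for all at the level of morphisms of generic motives, independently of $H$; every realization then inherits them for free. Your direct approach would end up proving the same identities, but phrased inside $\A$ after applying $H$, and you would eventually notice that $H$ plays no role in the arguments, which is exactly the observation that leads to the paper's formulation. A minor terminological point: your ``restriction'' and ``corestriction'' are swapped relative to Rost's (and the paper's) conventions --- Rost calls the covariant map along any extension the corestriction and the transfer along a finite extension the restriction (or norm $N_{L/E}$); this is harmless but worth aligning.
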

Given a functor $H$ as in the above theorem,
 one can apply the usual considerations alluded to above and
 get a coniveau spectral sequence. To describe it,
 we introduce the following useful notation:
 for any smooth $k$-scheme $X$ and any triple of integers $(q,n,r)$,
 we put:
$$
H^{q,n}_r(X):=H\big(M(X)(q+r,n+r)\big).
$$
In our case, given a smooth $k$-scheme $X$ and an integer $n \in \ZZ$,
 the coniveau spectral sequence takes the form:
\begin{equation} \tag{$*$} \label{eq:intro:coniv_spectral}
E_1^{p,q}(X,n)=\bigoplus_{x \in X^{(p)}} \hat H^{q,n}_*(\kappa(x))
 \Rightarrow H^{p+q,n}_*(X),
\end{equation}
with coefficients in the category of graded objects of $\A$,
 differentials in the $E_1$-page being homogeneous of degree $-1$.
 It converges as required to the coniveau filtration
  on the cohomology $H$
 -- see \eqref{coniv_spectral} in the text.
 Explicitly, the $p$-th filtered part of this filtration is made of 
  cohomology classes of $X$ 
  which vanish on an open subscheme of $X$
  with complement of codimension at least $p$ in $X$.

With these notations, we can now state the main result of this paper:
\begin{prop*}[Prop. \ref{realisation:coniveau&Rost}]
Consider a cohomological functor $H:\dmgm^{op} \rightarrow \A$
 as above and denote by $d_1^{p,q}$ the differentials of
 the spectral sequence \eqref{eq:intro:coniv_spectral}.
 
Then for any couple of integers $(p,q) \in \ZZ^2$,
$$
d_1^{p,q}=d^p_{\hat H^{q,n}}.
$$
\end{prop*}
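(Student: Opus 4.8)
The plan is to prove the equality componentwise, exploiting that both sides are sums over incidence pairs of points. For $x \in X^{(p)}$ and $y \in X^{(p+1)}$, the $(x,y)$-component of either map vanishes unless $y$ lies in the closure $Z := \overline{\{x\}}$, and in that case it factors through the local ring $\mathcal O_{Z,y}$, which is one-dimensional; moreover, for fixed $x$ only finitely many $y$ give a nonzero contribution, matching the first axiom of a cycle module recalled above. So I would first reduce the statement to the equality of the elementary components $d_1^{x,y}$ and $(d^p_{\hat H^{q,n}})^{x,y}$, each a homogeneous degree $-1$ map $\hat H^{q,n}_*(\kappa(x)) \to \hat H^{q,n}_*(\kappa(y))$. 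On the motivic side this locality is furnished by the compatibility of the coniveau exact couple with restriction to open subschemes, together with the passage to the limit over neighbourhoods of the points, which is exactly the colimit defining the premodule $\hat H^{q,n}_*$ in the theorem quoted above.

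Second, I would appeal to the description of the differentials of the motivic coniveau exact couple obtained in the body: each elementary component $d_1^{x,y}$ is built from the connecting morphism of the localization triangle attached to the point $y$ of $Z$, followed where necessary by a transfer. Precisely, when $Z$ is normal at $y$ the local ring $\mathcal O_{Z,y}$ is a discrete valuation ring with valuation $v$, and $d_1^{x,y}$ is the induced motivic residue $\partial_v$; in general one passes to the normalization $\tilde Z \to Z$, and $d_1^{x,y}$ is the sum, over the finitely many points $z_i$ of $\tilde Z$ lying above $y$, of the motivic residue $\partial_{v_i}$ for the valuation $v_i$ of $\mathcal O_{\tilde Z, z_i}$, followed by the motivic transfer along the finite extension $\kappa(z_i)/\kappa(y)$. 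This is term for term Rost's formula for the $(x,y)$-component of $d^p_{X,\hat H^{q,n}}$, so the whole problem is reduced to identifying the motivic residues and transfers with the residue and transfer data carried by the premodule $\hat H^{q,n}_*$.

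This matching is the heart of the argument, and where I expect the main obstacle. By construction of the cycle premodule associated with $H$, its residues $\partial_v$ and its transfers are defined from precisely the same localization boundaries and finite correspondences that enter the exact couple, so unwinding the two constructions side by side should yield the identification. The delicate point is to verify that the geometric inputs still agree after the colimit over neighbourhoods: that the motivic boundary, taken at the generic point of $Z$ and pushed to the residue field, induces on $\hat H^{q,n}_*$ the tame symbol itself rather than a twist of it, and that its normalization (in internal degree $1$ it must be the valuation) is respected. I would settle this either by a direct comparison of the two boundary morphisms, or, more robustly, by invoking the characterization of Rost's residue and transfer maps by the premodule axioms (D1)--(D4) together with this normalization: since the motivic maps satisfy these axioms and reduce in the bottom internal degree to the valuation, uniqueness forces them to coincide with $\partial_v$ and the transfer. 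Summing the matched components over all pairs $(x,y)$ then gives $d_1^{p,q}=d^p_{\hat H^{q,n}}$.
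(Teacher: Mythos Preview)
Your outline is correct and follows the paper's approach: decompose into $(x,y)$-components and invoke the computation of the motivic coniveau differentials (Proposition~\ref{compute_differentials}), which expresses the $(x,y)$-component as the generic-motive morphism $\partial_y^x=\sum_{t} \partial_{v_t}\circ\varphi_{t*}$; applying $H$ then gives Rost's formula \eqref{eq:diff_modl}.

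Where you diverge from the paper is in your third step, which you call the heart of the argument. In the paper this step is \emph{tautological}, not delicate: the cycle premodule $\hat H^{q,n}_*$ is \emph{defined} (Definition~\ref{generic_transform4real}) as the restriction of the extended functor $\bar H$ to the category $\dmgmo$ of generic motives, and a cycle premodule in this paper is by definition a functor on $\dmgmo$ (\S\ref{cycle_modules}). Hence its structural residues and norms are, by construction, the $H$-images of the morphisms $\partial_v$ and $\varphi_*$ of Paragraph~\ref{functoriality_generic_motive}. There is nothing to normalize and no uniqueness principle to invoke; the matching you propose to check is an equality of definitions. Your alternative route via a characterization by the axioms (D1)--(D4) plus the degree-$1$ normalization would work in a setting where the premodule structure on $\hat H^{q,n}_*$ is given independently, but here it is unnecessary overhead. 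Also note that the finite-support property you cite as input is in the paper an \emph{output}: it is part (i) of Proposition~\ref{realisation:coniveau&Rost}, deduced from the fact that the spectral-sequence differential is a priori well defined and equals the Rost-type sum.
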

This gives back the computation of Quillen but replacing K-theory with motivic cohomology
 -- it can be shown this amounts to the same using the more involved work
 of \cite{Deg10}. The most interesting fact is that we obtain as a corollary
 that the divisor maps $d^*_{\hat H^{q,n}_*}$ induce a well defined complex: 
 in the terminology of Rost, $\hat H^{q,n}_*$ is a cycle module.
 We already proved this fact in \cite{Deg5bis} using results of Rost and the fact
 the field $k$ is perfect. The proof given here is much more direct and geometric.
 To be precise, it relies on our work on the Gysin triangle \cite{Deg6} --
 the main ingredient is the functoriality of residues with respect to Gysin
 morphisms.

As apparent from the beginning,
 this study is closely connected with the pioneering work of Bloch and Ogus.
 In fact, as a corollary of the preceding computation together with
 the results of \cite{Ros} and \cite{Deg9}, we get back the results of
 Bloch and Ogus for the cohomology $H$. More precisely:
\begin{thm*}[Prop. \ref{prop:coniv_spectral3} and \ref{prop:comput_unramified_sheaf}]
Consider as above a cohomological functor $H:\dmgm^{op} \rightarrow \A$.
For any smooth scheme $X$,
 let us denote by $\mathcal H^{q,n}_*(X)$ the kernel of the divisor map
 $d^0_{X,\hat H^{q,n}_*}$.

Then $\mathcal H^{q,n}_*$ is a homotopy invariant Nisnevich sheaf
 with transfers in the sense of Voevodsky. It coincides with
 the Zariski sheaf associated with $H^{q,n}_*$.

The coniveau spectral sequence \eqref{eq:intro:coniv_spectral}
 can be written from $E_2$ on as:
$$
E_2^{p,q}(X,n)=H^p_\mathrm{Zar}(X,\mathcal H^{q,n}_*)
 \simeq H^p_\nis(X,\mathcal H^{q,n}_*)
 \Rightarrow H^{p+q,n}_*(X).
$$
\end{thm*}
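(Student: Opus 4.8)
\emph{Proof strategy.} The plan is to combine the main computation (Prop.~\ref{realisation:coniveau&Rost}) with Rost's theory of cycle modules \cite{Ros} and the equivalence of categories of \cite{Deg9}, and then to run the classical Bloch--Ogus argument by sheafifying the coniveau spectral sequence. First, by Prop.~\ref{realisation:coniveau&Rost} the differential $d_1^{p,q}$ of \eqref{eq:intro:coniv_spectral} equals the divisor map $d^p_{\hat H^{q,n}}$. Since $d_1\circ d_1=0$ for any spectral sequence, the premodule $\hat H^{q,n}_*$ satisfies Rost's axioms \textbf{(FD)} and \textbf{(C)}, hence is a cycle module, and its cycle complex is exactly the $E_1$-line of \eqref{eq:intro:coniv_spectral}. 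I would then invoke \cite{Deg9} to associate with this cycle module a homotopy invariant Nisnevich sheaf with transfers, whose sections over an integral $X$ are the \emph{unramified} elements, i.e. the kernel of $d^0_{X,\hat H^{q,n}_*}$. By construction this sheaf is $\mathcal H^{q,n}_*$, which settles the first assertion.

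\emph{Comparison with the Zariski sheaf of $H^{q,n}_*$.} Next I would restrict the coniveau exact couple to open subschemes $U\subset X$ and sheafify the $E_1$-terms in the Zariski topology. The localization description of these terms (coming from the Theorem of \cite{Deg5bis}) shows that the sheafified $E_1$-line is the Cousin complex attached to the cycle module $\hat H^{q,n}_*$. Because $\hat H^{q,n}_*$ is a cycle module, the Gersten resolution of \cite[chap.~3, 4.37]{FSV} -- equivalently, the acyclicity of Rost's cycle complex on local schemes -- shows that this complex of sheaves is a flasque resolution of its degree-$0$ homology sheaf. Local exactness of the resolution identifies that sheaf both with $\mathcal H^{q,n}_*$ and with the Zariski sheaf associated with the presheaf $H^{q,n}_*$, giving the second assertion.

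\emph{The $E_2$-page.} Given the flasque resolution above, the standard Bloch--Ogus argument identifies $E_2^{p,q}(X,n)$ with $H^p_{\mathrm{Zar}}(X,\mathcal H^{q,n}_*)$. The comparison $H^p_{\mathrm{Zar}}(X,\mathcal H^{q,n}_*)\simeq H^p_{\nis}(X,\mathcal H^{q,n}_*)$ then follows from Voevodsky's theorem that, over a perfect field, a homotopy invariant Nisnevich sheaf with transfers has identical Zariski and Nisnevich sheafifications and cohomology; this applies to $\mathcal H^{q,n}_*$ by the first part. This completes all three assertions.

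\emph{Main obstacle.} The crux is the resolution property: one must know that the sheafified coniveau complex is acyclic outside degree $0$ and that its degree-$0$ sheaf is precisely the Zariski sheafification of $H^{q,n}_*$. This Cohen--Macaulay / Gersten property is exactly where the cycle module structure of $\hat H^{q,n}_*$ -- itself a by-product of Prop.~\ref{realisation:coniveau&Rost} -- is used in an essential way, together with the transfers and homotopy invariance furnished by \cite{Deg9}. The Zariski--Nisnevich comparison is the other nontrivial ingredient and relies on the perfectness of $k$.
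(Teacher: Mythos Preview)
Your proposal is correct and follows essentially the same route as the paper: invoke Prop.~\ref{realisation:coniveau&Rost} to know $\hat H^{q,n}_*$ is a cycle module and that the $E_1$-line is its Rost complex, deduce the sheaf-with-transfers structure on $\mathcal H^{q,n}_*$ from the theory of cycle modules, identify $E_2$ with Zariski cohomology via the Gersten/Cousin resolution, and compare Zariski with Nisnevich via Voevodsky's theorem. The only cosmetic differences are in the citations and in the presentation of one step: for the transfers structure the paper cites \cite[(8.6)]{Ros} and \cite[6.9]{Deg4bis} rather than the equivalence of \cite{Deg9}, and for the identification of $\mathcal H^{q,n}_*$ with the Zariski sheafification of $H^{q,n}_*$ the paper sheafifies the whole spectral sequence and uses that the sheafified $E_2$-page is concentrated on the line $p=0$, whereas you phrase the same fact as local exactness of the Cousin complex---these are two formulations of the same Bloch--Ogus argument.
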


Let us finish this introduction with a concrete application of
 these theoretical results.
Let $k$ be a perfect field of characteristic $p>0$,
 $W$ its Witt ring and $K$ the fraction field of $W$.
 Given a smooth (resp. smooth affine) $k$-scheme $X$,
  we denote by $H^*_{crys}(X/W)$ 
  (resp. $H^*_{MW}(X)$)
 the crystalline cohomology (resp. Monsky-Washnitzer) of $X$ defined in \cite{Ber0}
  (resp. \cite{MW}).

Then the following properties hold:
\begin{itemize}
\item Let $\cH^*_{MW}$ be the Zariski sheaf on the category of smooth $k$-schemes
 associated with the presheaf $H^*_{MW}$.

Then $\cH^*_{MW}$ is a homotopy invariant Nisnevich sheaf with transfers
 and $\Gamma(X,\cH^*_{MW})$ is invariant
  on the birational class of a smooth proper scheme $X$.
\item For any smooth proper scheme $X$,
 there exists a spectral sequence
$$
E_2^{p,q}=H^p_{\mathrm{Zar}}(X,\cH^q_{MW})
 \Rightarrow H^{p+q}_{crys}(X/W) \otimes K
$$
converging to the coniveau filtration on $H^i_{crys}(X/W)_K$.
\item When $k$ is separably closed,
 for any $p \geq 0$,
$$
H^p_{\mathrm{Zar}}(X,\cH^p_{MW})
=A^p(X) \otimes K
$$ where the right hand side is the group
  of $p$-codimensional cycles modulo algebraic equivalence.
\end{itemize}
This set of properties is a corollary of the preceding theorem
 when one uses the \emph{rigid cohomology} defined by Berthelot
 (e.g. \cite{Ber1})
 together with its realization
  $H_{rig}:\dmgm^{op} \rightarrow K\!-\!vs$ introduced in the work of
  Cisinski and the author on Mixed Weil cohomologies \cite{CD2}
  -- see Remark \ref{rigid_realization} for details.

\subsubsection*{Organization of the paper}
Section 1 is the technical heart of the paper. 
 We introduce the new notion of a triangulated exact couple (Def. \ref{triang_e_couple}),
 associated with any filtered object of a triangulated category together
 with a choice of cones.
 Our main example is what we call the \emph{motivic coniveau exact couple}
 (Def. \ref{coniv_exact_couple})
  which, through a realization functor of mixed motives,
  universally gives rise to the exact couple corresponding
  to the coniveau spectral sequence.
 In section \ref{sec:computations} we give the computation of the differentials 
 of this exact couple in terms of \emph{generic motives}
  (recalled in section \ref{sec:generic_motives}).
Section 2 relates this computation with the theory of cycle
modules through cohomological realizations and gives a proof of the results
stated in this introduction.

\subsubsection*{Foreword}
There has been a lot of work on the coniveau spectral sequence
 apart those cited in the introduction.
 Let us mention in particular \cite{CTHK} which gives,
 using a proof of Gabber,
 the computation of the $E_2$-term as in the above theorem
  in a more general setting -- which does not require homotopy invariance.
 It can be applied to the example given above by using in particular
 the \'etale descent theorem of \cite{CT} for rigid cohomology
 -- this is well known to imply Nisnevich excision as required in \cite{CTHK}.

Our contribution to the story is made of the following points:
\begin{itemize}
\item The unramified cohomology sheaves $\cH^{q,n}$, in our setting,
 admits a canonical structure of a homotopy invariant sheaf with transfers
  -- a particular case of \emph{motivic complex}.
\item The differentials in the $E_1$-term can be computed
 in terms of Rost theory through an explicit cycle module.
\item We have extended the identification of the diagonal terms
 in the $E_2$-page of the spectral
 sequence to cycles modulo algebraic equivalence,
  proved in characteristic $0$ in \cite{BO},
 to the case of characteristic $p$ for a reasonable cohomology theory
 (see the example above and Corollary \ref{Bloch-Ogus_algebraic}
 for the general case).
\end{itemize}

\section*{Notations and conventions}

We fix a base field $k$ which is assumed to be perfect. 
The word scheme will stand for any
separated $k$-scheme of finite type, and we will say that a scheme is
smooth when it is smooth over the base field. 
The category of smooth schemes
is denoted by $\sm$.

We will also use the theory of geometric motives defined by Voevodsky
 in \cite[chap. 5]{FSV}).
 Therefore, 
  we denote by $\smc$ the category of smooth schemes with morphisms
  the finite correspondences.
 This is an additive category whose objects are denoted under bracket
 -- eg $\lbrack X \rbrack$.
 The category of geometric effective motives, denoted by $\dmgme$,
  is a quotient of the bounded homotopy category associated with $\smc$.
 The motive $M(X)$ of a smooth scheme $X$ is the complex
  equal to $\lbrack X \rbrack$ placed in degree $0$.
 Given an open subscheme $U$ of a smooth scheme $X$,
  we will define the \emph{relative motive} of $X$ modulo $U$ as the
  cone of the canonical immersion $U \rightarrow X$ computed
  in the category of complexes with coefficients in $\smc$:
$$
M(X/U):=\mathrm{Cone}(\lbrack U \rbrack \rightarrow \lbrack X \rbrack).
$$
This motive is functorial with respect to morphisms of schemes which stabilize
 the open subscheme.

The category $\dmgme$ is monoidal, with unit $\ZZ:=M(\spec k)$.
 Recall the Tate twist can be defined as:
$$
\ZZ(1):=M(\dte/\GG)[-2].
$$
The category of geometric motives $\dmgm$
 is the stabilization of $\dmgme$ with respect to $\ZZ(1)$ --
 \emph{i.e.} the monoidal category obtained by formally inverting this object
 with respect to the tensor product.
 There is a canonical functor
$$
\Sigma^\infty:\dmgme \rightarrow \dmgm
$$
 and we will still denote by $M(X)$ (resp. $M(X/U)$)
 the image of $M(X)$ (resp. $M(X/U)$) by $\Sigma^\infty$.

According to the theory of Voevodsky,
 a \emph{homotopy invariant sheaf with transfers} with
 values in an abelian category $\A$ will be
 a functor $F:(\smc)^{op} \rightarrow \A$ which is a sheaf
 for the Nisnevich topology and such that for any smooth scheme $X$,
 the map $F(X) \rightarrow F(\dtex X)$ induced by the projection
 is an isomorphism.

\bigskip

We will use the following constructions of \cite{Deg6}:
\begin {itemize}
\item Given a pair $(X,Z)$ such that $X$ (resp. $Z$) is a smooth scheme
(resp. smooth closed subscheme of $X$ of pure codimension $n$),
 we defined a \emph{purity isomorphism} in \cite[Prop. 1.12]{Deg6}:
\begin{equation} \tag{Intro.a}\label{eq:purity}
\pur{X,Z}:M(X/X-Z) \rightarrow M(Z)(n)[2n].
\end{equation}
\item Given a projective morphism $f:Y \rightarrow X$ between smooth schemes
 of pure dimension $d$, we defined a \emph{Gysin morphism}
 in \cite[Def. 2.7]{Deg6}: 
\begin{equation} \tag{Intro.b}\label{eq:gysin}
f^*:M(X)(d)[2d] \rightarrow M(Y).
\end{equation}
\end{itemize}

\tableofcontents

\section{Motivic coniveau exact couple}

\subsection{Definition}

\subsubsection{Triangulated exact couple}

We introduce a triangulated version of the classical exact 
couples.
\begin{df} \label{triang_e_couple}
Let $\T$ be a triangulated category.
A triangulated exact couple is the data of bigraded
objects $D$ and $E$ of $\T$ and homogeneous morphisms 
between them
\begin{equation}
\label{hlg_exact_couple}
\xymatrix@=28pt{
D\ar^{(1,-1)}_\alpha[rr]
 &&  D\ar^{(0,0)}_/8pt/\beta[ld] \\
 & E\ar^{(-1,0)}_/-6pt/\gamma[lu] &
}
\end{equation}
with the bidegrees of each morphism indicated in
the diagram and such that
the above triangle is a distinguished triangle in each
bidegree.\footnote{Note this implies in particular the relation
$D_{p,q+1}=D_{p,q}[-1]$ for any couple of integers $(p,q)$.}
\end{df}

Given such a triangulated exact couple,
 we will usually put $d=\beta \circ \gamma$, 
 homogeneous endomorphism of $E$ of bidegree $(-1,0)$. 
We easily get that $d^2=0$, thus obtaining a complex
$$
... \rightarrow E_{p,q} \xrightarrow{d_{p,q}} E_{p-1,q}
 \rightarrow ...
$$

Let $\A$ be an abelian category. A cohomological functor
with values in $\A$ is an additive functor
$H:\T^{op} \rightarrow \A$ which sends distinguished 
triangles to long exact sequences. For $p$ an integer, 
we simply put $H^p=H \circ .[-p]$. \\
Apply the contravariant functor $H=H^0$
 to the diagram \eqref{hlg_exact_couple}, 
 we naturally obtain a commutative diagram of bigraded
 objects of $\A$:
$$
\xymatrix@=28pt{
H(D)\ar_{(1,0)}^/+6pt/{\gamma^*}[rd]
 &&  H(D)\ar_{(-1,1)}^{\alpha^*}[ll]
  \\
 & H(E)\ar_{(0,0)}^/-8pt/{\beta^*}[ru] &
}
$$
This is an exact couple of $\A$ 
in the classical sense (following the convention of \cite[th. 2.8]{McC}). 
Thus we can associate with this exact couple a spectral
sequence:
$$
E_1^{p,q}=H(E_{p,q})
$$
with differentials being 
$H(d_{p,q}):H(E_{p-1,q}) \rightarrow H(E_{p,q})$.

\bigskip

\begin{df}
Let $\T$ be a triangulated category and $X$ an 
object of $\T$.
\begin{enumerate}
\item A tower $X_\bullet$ over $X$ is the data of a sequence
$(X_p \rightarrow X)_{p \in \ZZ}$ of objects over $X$
and a sequence of morphisms over $X$
$$
.. \rightarrow X_{p-1} \xrightarrow{j_p} X_p
 \rightarrow ...
$$
\item Let $X_\bullet$ be a tower over $X$. Suppose
that for each integer $p$ 
we are given a distinguished triangle
$$
X_{p-1} \xrightarrow{j_p} X_{p}
 \xrightarrow{\pi_p} C_p
 \xrightarrow{\delta_p} X_p[1]
$$
where $j_p$ is the structural morphism of the tower
$X_\bullet$.

Then we associate with the tower $X_\bullet$ and the choice
of cones $C_\bullet$ a triangulated exact couple
$$
D_{p,q}=X_p[-p-q], \qquad
E_{p,q}=C_p[-p-q]
$$
with structural morphisms
$$
\alpha_{p,q}=j_p[-p-q], \
\beta_{p,q}=\pi_{p}[-p-q], \
\gamma_{p,q}=\delta_{p}[-p-q].
$$
\end{enumerate}
\end{df}

Let $H:\T^{op} \rightarrow \A$ be a cohomological functor.
In the situation of this definition, we thus have a spectral 
sequence of $E_1$-term:
$E_1^{p,q}=H^{p+q}(C_p)$.

We consider the case where $X_\bullet$ is bounded and
exhaustive \emph{i.e.}
$$
X_p=\left\{ \begin{array}{ll} 0 & \text{if } p \ll 0 \\
X & \text{if } p \gg 0. \end{array} \right.
$$

In this case, the spectral sequence is concentrated
in a band with respect to $p$ and
we get a convergent spectral sequence
$$
E_1^{p,q}=H^{p+q}(C_p) 
 \Rightarrow H^{p+q}(X).
$$
The filtration on the abutment is then given by the
formula
$$
Filt^r(H^{p+q}(X))=\mathrm{Ker}\left(
H^{p+q}(X) \rightarrow H^{p+q}(X_r)
\right).
$$


\subsubsection{Definition}

In the next definitions,
 we introduce our main example of an exact couple,
 based on a filtration by certain open subsets.
\begin{df}
Let $X$ be a scheme.

A \emph{flag} on $X$ is a decreasing sequence $(Z^p)_{p \in \NN}$ 
of closed subschemes of $X$ such that for all integer $p\geq 0$,
$Z^p$ is of codimension greater or equal to $p$ in $X$.
We let $\drap X$ be the set of flags of $X$, 
ordered by termwise inclusion.
\end{df}

We will consider a flag $(Z^p)_{p \in \NN}$ has a 
$\ZZ$-sequence by putting $Z^p=X$ for $p<0$.
It is an easy fact that, with the above definition,
 $\drap X$ is right filtering.

Recall that a pro-object of a category $\mathcal C$ 
is a (\emph{covariant}) functor $F$ from a left filtering 
category $\mathcal I$ to the category $\mathcal C$.
Usually, we will denote $F$ by the intuitive notation
$\pplim{i \in \mathcal I} F_i$
 and call it the \emph{formal projective limit}.
%
\begin{df}
Let $X$ be a scheme.
We define the \emph{coniveau filtration} of $X$
as the sequence $(F_pX)_{p \in \ZZ}$ of pro-open subschemes 
of $X$ such that~:
$$
F_pX=\pplim{Z^* \in \drap X^{op}} (X-Z^p).
$$

We denote by $j_p:F_{p-1}X \rightarrow F_{p}X$ the canonical 
pro-open immersion,
$$
j_p=\pplim{Z^* \in \drap X^{op}} 
\left((X-Z^{p-1}) \rightarrow (X-Z^{p})\right).
$$
\end{df}

\begin{rem}
Usually, the coniveau filtration occurs on cohomology.
 As we will see below, the filtration we have just introduced on
 a scheme will give rise to the classical coniveau filtration, on cohomology.
 Therefore, we think our terminology is both handy and sufficiently accurate.
\end{rem}

Unfortunately, this is a filtration by pro-schemes, and 
if we apply to it the functor $M$ termwise, we
obtain a filtration of $\mot X$ in the category 
$\pro{\dmgme}$.
This latter category is never triangulated. Nonetheless, 
the definition of an exact couple still makes sense 
for the pro-objects of a triangulated category if we replace
distinguished triangles by pro-distinguished triangles\footnote{
\emph{i.e.} the formal projective limit of distinguished triangles.}.
We consider the tower of pro-motives above
the constant pro-motive $\mot X$
$$
... \rightarrow \mot{F_{p-1}X} \xrightarrow{j_{p*}} \mot{F_{p}X}
\rightarrow ...
$$
We define the following canonical pro-cone
$$
Gr_p^M(X)=\pplim{Z^* \in \drap X^{op}} \mot{X-Z^p/X-Z^{p-1}}.
$$
using relative motives -- see the general notations of the article.
We thus obtain pro-distinguished triangles:
$$
\mot{F_{p-1}X} \xrightarrow{j_{p*}} \mot{F_{p}X}
 \xrightarrow{\pi_p} Gr_p^M(X)
  \xrightarrow{\delta_p} \mot{F_{p-1}X}[1].
$$
\begin{df}
\label{coniv_exact_couple}
Consider the above notations.
We define the motivic coniveau 
exact couple associated with $X$ in $\pro{\dmgme}$ as
$$
D_{p,q}=\mot{F_pX}[-p-q], \ \qquad
E_{p,q}=Gr_p^M(X)[-p-q],
$$
with structural morphisms
$$
\alpha_{p,q}=j_p[-p-q], \ 
\beta_{p,q}=\pi_{p}[-p-q], \ 
\gamma_{p,q}=\delta_{p}[-p-q].
$$
\end{df}
According to the notation which follows Definition \ref{triang_e_couple},
 the differential associated with the motivic coniveau exact couple is
 equal to the composite map of the following diagram:
\begin{equation} \label{eq:df:diff}
\begin{split}
\xymatrix@R=20pt@C=27pt{
Gr_{p+1}^M(X)[-p-q-1]\ar^-{\delta_{p+1}}[r]\ar_/-45pt/{d_{p+1,q}}[rrd]
 & \mot{F_{p}X}[-p-q]\ar@{=}[d] & \\
 &  \mot{F_{p}X}[-p-q]\ar_-{\pi_{p}}[r] & Gr_{p}^M(X)[-p-q].
}
\end{split}
\end{equation}

%

\subsection{Computations} \label{sec:computations}

\subsubsection{Recollection and complement on generic motives}

\label{sec:generic_motives}

\begin{num} \label{num:model}
We will call {\it function field} any
 finite type field extension $E/k$.
A {\it model} of the function field $E$ will be a connected smooth 
scheme $X/k$ with a given $k$-isomorphism between the function field 
of $X$ and $E$.
Recall the following definition from \cite[3.3.1]{Deg5bis}~:
\end{num}
\begin{df}
Consider a function field $E/k$ and an integer $n \in \ZZ$. 
We define the {\it generic motive of $E$ with weight $n$} as
the following pro-object of $\dmgm$~:
$$
M(E)(n)[n]:=\pplim{A \subset E, \ \spec A \text{ model of } E/k} M(\spec A)(n)[n].
$$
We denote by $\dmgmo$ the full subcategory of $\mathrm{pro}\!-\!\dmgm$ consisting
of the generic motives.
\end{df}
Of course, given a function field $E$ with model $X/k$,
 the pro-object $M(E)$ is canonically isomorphic to the pro-motive
  made by the motives of non empty open subschemes of $X$.

\begin{num} \label{functoriality_generic_motive}
The interest of generic motives lies in their functoriality
which we now review~: \\
(1) Given any extension of function fields
 $\varphi:E \rightarrow L$,
  we get a morphism $\varphi^*:M(L) \rightarrow M(E)$ 
   (by covariant functoriality of motives). \\
(2) Consider a finite extension of function fields
 $\varphi:E \rightarrow L$. One can find respective models $X$ and $Y$
  of $E$ and $L$ together with a finite morphism of schemes $f:Y \rightarrow X$
   which induces on function fields the morphism $\varphi$ through the structural
    isomorphisms. \\
For any open subscheme $U \subset X$,
 we put $Y_U=Y \times_X U$ and let $f_U:Y_U \rightarrow U$
be the morphism induced by $f$. It is finite and surjective. In particular,
its graph seen as a cycle in $U \times Y_U$ defines a finite correspondence
from $U$ to $Y_U$,
denoted by $\tra f_U$ and called the \emph{transpose} of $f_U$.
We define the {\it norm morphism} $\varphi_*:M(E) \rightarrow M(L)$ as
 the well defined pro-morphism (see \cite[5.2.9]{Deg5bis})
$$
\pplim{U \subset X} \Bigg(M(U) \xrightarrow{(\tra f|_U)_*} M(Y_U)\Bigg)
$$
through the structural isomorphisms of the models $X$ and $Y$. \\
(3) Consider a function field $E$ and a unit $x \in E^\times$.
Given a smooth sub-$k$-algebra $A \subset E$ which contains $x$ and $x^{-1}$,
we get a morphism $\spec A \rightarrow \GG$.
Let us denote by $\gamma_x^A:\spec A \rightarrow \spec A \times \GG$ its
 graph.
Recall the canonical decomposition $M(\GG)=\ZZ \oplus \ZZ(1)[1]$ 
and consider the associated projection $M(\GG) \xrightarrow \pi \ZZ(1)[1]$. 
We associate with the unit $x$
a morphism $\gamma_x:M(E) \rightarrow M(E)(1)[1]$ defined as follows:
$$
\pplim{x,x^{-1} \in A \subset E} \big( M(\spec A)
 \xrightarrow{\gamma_{x*}^A} M(\spec A) \otimes M(\GG)
 \xrightarrow{\pi} M(\spec A)(1)[1]\big).
$$
One can prove moreover that if $x \neq 1$,
$\gamma_x \circ \gamma_{1-x}=0$ and $\gamma_{1-x} \circ \gamma_x=0$
so that any element $\sigma \in K_n^M(E)$ of Milnor K-theory defines
a morphism $\gamma_\sigma:M(E) \rightarrow M(E)(n)[n]$
 (see \cite[5.3.5]{Deg5bis}). \\
(4) Let $E$ be a function field and $v$ a discrete valuation on $E$
with ring of integers $\mathcal O_v$ essentially of finite type over $k$.
Let $\kappa(v)$ be the residue field of $v$. \\
As $k$ is perfect, there exists a connected smooth scheme $X$ with a 
point $x \in X$ of codimension $1$ such that
 $\mathcal O_{X,x}$ is isomorphic to $\mathcal O_v$. 
This implies $X$ is a model of $E/k$. Moreover,
reducing $X$, one can assume the closure $Z$ of $x$ in $X$ is smooth
so that it becomes a model of $\kappa(v)$. \\
For an open neighborhood $U$ of $x$ in $X$,
 we put $Z_U=Z \times_X U$.
We define the {\it residue morphism} 
$\partial_v:M(\kappa(v))(1)[1] \rightarrow M(E)$ associated with $(E,v)$ 
as the pro-morphism
$$
\pplim{x \in U \subset X}
 \big(M(Z_U)(1)[1] \xrightarrow{\partial_{U,Z_U}} M(U-Z_U)\big).
$$
The fact it is a morphism of pro-objects follows
 from the functoriality of residues with respect to open immersions
 (see \cite[5.4.6]{Deg5bis}).
 \end{num}

\begin{rem}
These morphisms satisfy a set of relations 
which in fact corresponds exactly to the axioms of a cycle premodule 
by M.~Rost (cf \cite[(1.1)]{Ros}). 
We refer the reader to \cite[5.1.1]{Deg5bis} for a precise statement.
\end{rem}

\begin{num}
Consider again the situation and notations of point (2) 
in paragraph \ref{functoriality_generic_motive}.
With the Gysin morphism we have introduced before,
one can give another definition for the norm morphism of
generic motives. \\
Indeed, for any open subscheme $U$ of $X$, the morphism 
$f_U:Y_U \rightarrow U$ is finite of relative dimension $0$ and 
thus induces a Gysin morphism $f_U^*:M(U) \rightarrow M(Y_U)$ 
 -- see \eqref{eq:gysin}. The morphism $f_U^*$ is natural
 with respect open immersions according to \cite[Prop. 2.10]{Deg6}. 
 Thus, we get a morphism of pro-objects
$$
\pplim{U \subset X}
 \big( \mot U \xrightarrow{f_U^*} \mot{Y_U)} \big).
$$
which induces through the structural isomorphisms of the
models $X$ and $Y$ a morphism 
$\varphi'_*:\mot E \rightarrow \mot L$.
\end{num}
\begin{lm}
Consider the above notations.
Then, $\varphi'_*=\varphi_*$.
\end{lm}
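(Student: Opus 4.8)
The plan is to reduce the equality of pro-morphisms to a single levelwise comparison and then to identify the Gysin morphism of a finite morphism with the transfer attached to the transpose of its graph. By construction, $\varphi_*$ and $\varphi'_*$ are the formal projective limits over the open subschemes $U \subset X$ of the morphisms $(\tra f|_U)_*$ and $f_U^*$ respectively, and both families are compatible with restriction to smaller opens: for the Gysin side this is exactly \cite[Prop. 2.10]{Deg6}, and for the correspondence side it is immediate from $Y_{U'} = Y_U \times_U U'$. Since two pro-morphisms coincide once their components agree over a cofinal family of indices, it suffices to prove, for a fixed finite morphism $g \colon V \to W$ of relative dimension $0$ between smooth schemes, that the Gysin morphism $g^* \colon \mot W \to \mot V$ of \eqref{eq:gysin} coincides with the morphism $(\tra g)_* \colon \mot W \to \mot V$ induced by the finite correspondence $\tra g$, the transpose of the graph of $g$.

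To compare these two maps I would work through the graph. The correspondence $\tra g$ is, up to the swap of factors, the cycle attached to the closed immersion $\gamma_g \colon V \to V \times W$, $v \mapsto (v, g(v))$, which is a section of the first projection and is finite over $W$ through the second projection $q$, so that $g = q \circ \gamma_g$. Using the factorization defining $g^*$ and the functoriality of the Gysin morphism under composition established in \cite{Deg6}, I would express $g^*$ in terms of the purity isomorphism $\pur{V \times W, \gamma_g(V)}$ of \eqref{eq:purity} attached to the regular immersion $\gamma_g$, the Tate twist it introduces being cancelled by the Gysin contribution of the projection; in parallel, I would unwind Voevodsky's correspondence-induced morphism $(\tra g)_*$ along the very same graph.

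The hard part, and the real content of the lemma, is to reconcile these two descriptions, i.e. to match the purity-based construction of the Gysin morphism of \cite{Deg6} with the action of finite correspondences in $\smc$. The difficulty is genuine rather than formal: the twist-cancelling factorization of $g^*$ does not split into finite correspondences --- the projection $q$ is not finite --- so the comparison cannot be carried out factor by factor and must instead track the deformation-to-the-normal-cone construction underlying $\pur{V \times W, \gamma_g(V)}$ and check that, after all identifications, it reproduces the transfer carried by $\tra g$. I expect this reconciliation of the geometric transfer structure of \cite{Deg6} with the correspondence transfers of Voevodsky to be the main obstacle; granting the relevant compatibility from the Gysin formalism of \cite{Deg6}, the remaining steps --- the bookkeeping of Tate twists and suspensions, and the passage back to the formal projective limit --- are routine.
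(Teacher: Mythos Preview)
Your reduction to a single levelwise identity is sound: it is enough to show $g^*=(\tra g)_*$ for a finite morphism $g$ between smooth schemes, and the passage to the formal projective limit is indeed routine. The gap is that you do not prove this identity; you locate the difficulty correctly and then ``grant the relevant compatibility from the Gysin formalism of \cite{Deg6}''. But that compatibility is available in \cite{Deg6} only in the \'etale case (Prop.~2.15). For a general finite $g$ --- in particular a purely inseparable one --- your graph factorization $g=q\circ\gamma_g$ runs into the problem you yourself note: $q$ is not projective, so the Gysin calculus does not split along it, and after compactifying one is left with a closed immersion whose source involves a \emph{non-reduced} fibre product. Tracking the deformation to the normal cone through that is precisely the hard step, and your outline gives no mechanism for it.

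The paper avoids the general statement by a d\'evissage you do not attempt. Both $\varphi_*$ and $\varphi'_*$ are functorial for towers of function fields (this uses the pro-formulation, which lets one choose compatible models), so one may reduce to the two generating cases: $L/E$ separable, where an \'etale model exists and \cite[Prop.~2.15]{Deg6} applies directly; and $L=E[a^{1/q}]$ purely inseparable of degree $q$. In the second case there is an explicit smooth model $f:\spec{A[t]/(t^q-a)}\to\spec A$, and $f$ factors as a closed immersion into $\PP^1_X$ followed by the projection. The key geometric input is that $Y\times_X Y$ is an exact thickening of $Y$ of order $q$ in the sense of \cite[1.18]{Deg6}; this, together with \cite[2.2.15 and 2.5.2(2)]{Deg5bis}, makes the base-change square relating $f^*$ and $\tra f_*$ commute. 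That explicit handling of the inseparable case is the missing ingredient in your proposal.
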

\begin{proof}
By functoriality, we can restrict the proof to 
the cases where $L/E$ is separable or $L/E$ is purely inseparable.

In the first case, we can choose a model $f:Y \rightarrow X$ 
of $\varphi$ which is {\'e}tale. Then the lemma follows from
\cite[Prop. 2.15]{Deg6}.

In the second case, we can assume that $L=E[\sqrt[q] a]$ for
$a \in E$. Let $A \subset E$ be a sub-$k$-algebra containing 
$a$ such that $X=\spec A$ is a smooth scheme. Let
$B=A[t]/(t^q-a)$. Then $Y=\spec B$ is again a smooth scheme
(over $k$)
and the canonical morphism $f:Y \rightarrow X$ is a model
of $L/E$. We consider its canonical factorisation
$Y \xrightarrow i \PP^1_X \xrightarrow p X$ corresponding
to the parameter $t$, together with the following diagram
 made of two cartesian squares:
$$
\xymatrix@R=12pt@C=22pt{
Y \times_X Y\ar^/4pt/j[r]\ar[d]
 & {\PP^1_Y}\ar^/-1pt/{f'}[d]\ar^q[r]
 & Y\ar^f[d] \\
Y\ar^i[r] & {\PP^1_X}\ar^p[r] & X. \\
}
$$
The scheme $Y \times_X Y$ is non reduced and its 
reduction is $Y$. Moreover, the canonical immersion 
$Y \rightarrow Y \times_X Y$ is an exact thickening of order
$q$ in $Y$ according to the definition of \cite[Par. 1.18]{Deg6}.
Thus, the following diagram is commutative~:
$$
\xymatrix@R=14pt@C=20pt{
\mot Y\ar@{}|/-2pt/{_{(1)}}[rd]
 & \mot {\PP^1_Y}\ar_{j^*}[l]\ar@{}|/-2pt/{_{(2)}}[rd]
 & \mot Y\ar_/-4pt/{q^*}[l] \\
 \mot Y\ar@{=}[u]
 & \mot {\PP^1_X}\ar_/-2pt/{\tra f'_*}[u]\ar_{i^*}[l]
 & \mot X.\ar_{\tra f_*}[u]\ar_/-4pt/{p^*}[l] \\
}
$$
Indeed, part (2) (resp. (1)) is commutative by \cite[2.2.15]{Deg5bis}
(resp. \cite[2.5.2: (2)]{Deg5bis}). Thus $f^*=\tra f_*$ and this concludes.
\end{proof}


\subsubsection{The graded terms}

For a scheme $X$, we denote by $X^{(p)}$ the set
of points of $X$ of codimension $p$. If $x$ is a point of $X$,
$\kappa(x)$ will denote its residue field. The symbol $\pprod{}$ 
denotes the product in the category of pro-motives.
\begin{lm}
\label{terms_coniveau}
Let $X$ be a smooth scheme and consider the notations
 of Definition \ref{coniv_exact_couple}.
Then, for all integer $p \geq 0$,
 purity isomorphisms of the form \eqref{eq:purity} induce a canonical isomorphism
$$
Gr_p^M(X) \xrightarrow{\epsilon_p} \pprod{x \in X^{(p)}} \mot{\kappa(x)}(p)[2p].
$$
\end{lm}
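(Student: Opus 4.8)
The plan is to compute the pro-cone $Gr_p^M(X)$ one flag at a time, apply purity, and then recognize the resulting formal limit as the announced product. Fix $p\geq 0$ and a flag $Z^\bullet\in\drap X$. Since $Z^p\subseteq Z^{p-1}$, the immersion $X-Z^{p-1}\hookrightarrow X-Z^p$ is open with closed reduced complement the stratum $T_Z:=Z^{p-1}\cap(X-Z^p)$, so by definition $M(X-Z^p/X-Z^{p-1})$ is the relative motive of the smooth open $U_Z:=X-Z^p$ modulo $U_Z-T_Z$. First I would pass to a cofinal subsystem of $\drap X$ on which $T_Z$ is smooth of pure codimension $p$ in $U_Z$: this is legitimate because $\drap X$ is right filtering, and it is achievable because $k$ is perfect (so the reduced structure of any closed subscheme is generically smooth) and because the singular locus together with the lower–dimensional components of the members of the flag have high enough codimension to be absorbed into a larger term of the flag without leaving the class of flags. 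Replacing the original pro-system by this one does not change the pro-object $Gr_p^M(X)$.

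On this cofinal system I would then apply, termwise, the purity isomorphism \eqref{eq:purity} in the form
$$
\pur{U_Z,T_Z}\colon M\big(U_Z/U_Z-T_Z\big)\ \xrightarrow{\ \sim\ }\ M(T_Z)(p)[2p],
$$
whose source is exactly $M(X-Z^p/X-Z^{p-1})$ since $U_Z-T_Z=X-Z^{p-1}$. The essential point is that these isomorphisms are compatible with the transition maps of the pro-system: a refinement of the flag corresponds to an open immersion $U_{Z'}\hookrightarrow U_Z$ restricting $T_{Z}$ to $T_{Z'}$, and the purity isomorphisms of \cite{Deg6} are natural with respect to such open immersions. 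This is precisely the functoriality already invoked for residues in \ref{functoriality_generic_motive}(4). Consequently the $\pur{U_Z,T_Z}$ assemble into an isomorphism of pro-objects $Gr_p^M(X)\xrightarrow{\sim}\pplim{Z}M(T_Z)(p)[2p]$, which carries the Tate twist $(p)[2p]$ uniformly.

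It remains to identify $\pplim{Z}M(T_Z)(p)[2p]$ with $\pprod{x\in X^{(p)}}\mot{\kappa(x)}(p)[2p]$, and this reorganization is the main obstacle. Each $T_Z$ is a finite disjoint union of smooth connected locally closed subschemes, so at a fixed stage $M(T_Z)$ is a finite direct sum indexed by the components, whose generic points are the codimension-$p$ points of $X$ lying in the stratum; as the flag grows, every $x\in X^{(p)}$ eventually belongs to some $T_Z$ and the component through $x$ shrinks to a pro-open neighbourhood of $x$, whose motive is by definition the generic motive $\mot{\kappa(x)}$. The subtlety is that the index set $X^{(p)}$ is infinite while each stage contributes only finitely many components: re-indexing the formal projective limit by connected components shows that the limit of these finite coproducts is the formal product $\pprod{x\in X^{(p)}}$ over all codimension-$p$ points, which is exactly why $\pprod{}$ (and not $\bigoplus$) appears in the statement. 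Tracking the twist from purity through this identification produces the canonical isomorphism $\epsilon_p$. I expect the smoothness reduction of the first step to be routine, and the compatibility of purity with refinement together with this product/limit bookkeeping to be where the real work lies.
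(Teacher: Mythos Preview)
Your approach is essentially the paper's: pass to a cofinal indexing on which the closed stratum is smooth of pure codimension $p$, apply the purity isomorphism \eqref{eq:purity} termwise, and then split the result over the points of $X^{(p)}$ by additivity of motives. The paper organizes the first step by reindexing the pro-system directly over pairs $(Z,Z')$ with $Z$ reduced of codimension $p$ and $Z'\supset Z_{\mathrm{sing}}$, rather than by staying inside $\drap X$ and restricting to a cofinal subfamily, but this is only a cosmetic difference; your added remarks on naturality of purity under open immersions and on the finite-coproduct/formal-product bookkeeping simply make explicit what the paper leaves implicit.
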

In particular, for any point $x \in X^{(p)}$ we get a canonical projection map:
\begin{equation} \label{eq:proj_e_c}
\pi_x:Gr_p^M(X) \rightarrow \mot{\kappa(x)}(p)[2p].
\end{equation}
\begin{proof}
Let $\mathcal I_p$ be the set of pairs $(Z,Z')$ such that
$Z$ is a reduced closed subscheme of $X$ of codimension $p$ and
$Z'$ is a closed subset of $Z$ containing its singular 
locus. Then
$$
Gr_p^M(X) \simeq \pplim{(Z,Z') \in \mathcal I_p} \mot{X-Z'/X-Z}.
$$
For any element $(Z,Z')$ of $\mathcal I_p$, 
under the purity isomorphism \eqref{eq:purity}, we get: \\
$\mot{X-Z'/X-Z} \simeq \mot{Z-Z'}(p)[2p]$.

For any point $x$ of $X$, we let $Z(x)$ be the reduced 
closure of $x$ in $X$ and
$\mathcal F(x)$ be the set of closed subschemes $Z'$ of $Z(x)$ 
 containing the singular locus $Z(x)_{sing}$ of $Z(x)$.
By additivity of motives,
 we finally get an isomorphism:
$$
Gr_p^M(X) \simeq \pprod{x \in X^{(p)}} 
 \pplim{Z' \in \mathcal F(x)} \mot{Z(x)-Z'}(p)[2p].
$$
This implies the lemma because $Z(x)-Z(x)_{sing}$ is
a model of $\kappa(x)$.
\end{proof}

\subsubsection{The differentials}

\num \label{df:differentielles}
Let $X$ be a scheme essentially of finite type\footnote{
For the purpose of the next proposition, we need only the case
where $X$ is smooth but the general case treated here will be
used later.} over $k$
 and consider a couple $(x,y) \in X^{(p)} \times X^{(p+1)}$.

Assume that $y$ is a specialisation of $x$.
Let $Z$ be the reduced closure of $x$ in $X$
 and $\tilde Z \xrightarrow f Z$ be its normalisation. 
Each point $t \in f^{-1}(y)$ corresponds to a
discrete valuation $v_t$ on $\kappa(x)$ with residue 
field $\kappa(t)$.
We denote by $\varphi_t:\kappa(y) \rightarrow \kappa(t)$ the 
morphism induced by $f$. Then, we define the following morphism 
of generic motives
\begin{equation} \label{eq:residues_pt}
\partial_y^x=
\sum_{t \in f^{-1}(y)}
 \partial_{v_t} \circ \varphi_{t*}:
M(\kappa(y))(1)[1]
  \rightarrow M(\kappa(x))
\end{equation}
using the notations of \ref{functoriality_generic_motive}. \\
If $y$ is not a specialisation of $x$,
  we put conventionally $\partial_y^x=0$.
\begin{prop}
\label{compute_differentials}
Consider the above hypothesis and notations.
If $X$ is smooth then the following diagram is commutative:
$$
\xymatrix@C=45pt{
Gr_{p+1}^M(X)\ar^{d_{p+1,-p-1}}[r]\ar_{\pi_y}[d]
 & Gr_p^M(X)[1]\ar^{\pi_x}[d] \\
M(\kappa(y))(p+1)[2p+2]
 \ar^-{\partial_y^x}[r]
  & M(\kappa(x))(p)[2p+1]
}
$$
where the vertical maps are defined in \eqref{eq:proj_e_c}
 and $d_{p+1,-p-1}$ in \eqref{eq:df:diff}.
\end{prop}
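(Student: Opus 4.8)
The plan is to unwind the abstractly defined differential $d_{p+1,-p-1}=\pi_{p}[1]\circ\delta_{p+1}$ of \eqref{eq:df:diff} into explicit geometry and then, after projecting onto the factors indexed by $x$ and $y$, to identify it with the residue-transfer map $\partial_y^x$. First I would localise. Since the projections $\pi_x$ and $\pi_y$ of \eqref{eq:proj_e_c} come from the product decomposition of Lemma \ref{terms_coniveau}, and since every object in sight is a pro-object indexed by flags of closed subschemes, it suffices to compute the composite $\pi_x[1]\circ d_{p+1,-p-1}$ after restricting to a neighbourhood of the generic point of $\overline{\{y\}}$. Writing $Z=\overline{\{x\}}$ for the reduced closure (of codimension $p$) and observing that $\overline{\{y\}}$ sits inside $Z$ as a codimension-one subset because $y$ specialises $x$, the only surviving data are the two nested closed subschemes $\overline{\{y\}}\subset Z\subset X$, which reduces the problem to the connecting map attached to this nested pair.

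Next I would identify the localised map $\pi_p\circ\delta_{p+1}$, transported through the purity isomorphisms \eqref{eq:purity} used in Lemma \ref{terms_coniveau}, with a genuine residue morphism. In the favourable case where $Z$ is smooth along $\overline{\{y\}}$, the codimension-one point $y\in Z$ determines a discrete valuation $v_y$ on $\kappa(x)$, and the definition of the residue morphism in \ref{functoriality_generic_motive}(4) --- itself extracted from the boundary of a purity/localisation triangle --- shows that the composite is exactly $\partial_{v_y}$. This is the heart of the matter: the abstract connecting map of the coniveau tower becomes, under purity, the geometric residue.

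The remaining difficulty is that $Z$ need not be smooth at $y$, so purity is unavailable directly on $Z$. To handle this I would pass to the normalisation $f\colon\tilde Z\to Z$; since $k$ is perfect, $\tilde Z$ is smooth in codimension one and is again a model of $\kappa(x)$, while the fibre $f^{-1}(y)$ consists of finitely many codimension-one points $t$, each carrying the valuation $v_t$ with residue field $\kappa(t)$. The transfer maps $\varphi_{t*}$ enter precisely because comparing the boundary map computed on $X$ with the residues computed on $\tilde Z$ is governed by the Gysin morphisms of \eqref{eq:gysin}, through the compatibility of residues with Gysin morphisms established in \cite{Deg6}. Summing over $t$ then assembles the contributions into $\partial_y^x=\sum_{t\in f^{-1}(y)}\partial_{v_t}\circ\varphi_{t*}$.

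The main obstacle I anticipate is exactly this last step: showing that the single abstract connecting map on the possibly singular $Z$ decomposes as the sum over the preimages $t$ in the normalisation, with the correct norm/transfer morphisms. This demands a careful diagram chase matching the localisation triangle on $X$ with its pullback to $\tilde Z$, and it is here that the functoriality of residues with respect to Gysin morphisms --- the technical input from \cite{Deg6} --- is indispensable.
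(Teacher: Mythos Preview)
Your proposal follows essentially the same route as the paper: unwind the differential as a pro-limit over flags, shrink away high-codimension loci, apply purity to reduce to a residue morphism, and then---because $Z=\overline{\{x\}}$ need not be smooth at $y$---pass to the normalisation $\tilde Z$ and invoke the compatibility of residues with Gysin morphisms (\cite[Prop.~2.13]{Deg6}) together with functoriality of Gysin to decompose the map as a sum over $t\in f^{-1}(y)$. The anticipated obstacle you flag is real and is exactly the technical core of the argument.

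There is one omission. You silently assume that $y$ is a specialisation of $x$ (``$\overline{\{y\}}$ sits inside $Z$ as a codimension-one subset because $y$ specialises $x$''), but the proposition must be verified for \emph{every} pair $(x,y)\in X^{(p)}\times X^{(p+1)}$, and when $y\notin\overline{\{x\}}$ the map $\partial_y^x$ is zero by convention, so you must show the composite $\pi_x[1]\circ d_{p+1,-p-1}\circ\pi_y^{\vee}$ vanishes. This is not automatic: the irreducible component $Y_y$ containing $y$ may still meet $Z_x=\overline{\{x\}}$. The paper handles this by observing that $Y_y\cap Z_x$ then has codimension $\geq p+2$ in $X$, shrinking $X$ to remove it, and applying naturality of residues with respect to the resulting empty cartesian square (\cite[Th.~1.34(2)]{Deg6}). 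You should add this case.
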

Of course, 
 this proposition determines every differentials of the motivic
 coniveau exact couple as $d_{p,q}=d_{p,-p}[-p-q]$.
\begin{proof}
According to Definition \ref{coniv_exact_couple},
 the morphism $d_{p+1,-p-1}$ is the formal projective limit of the morphisms
\begin{equation} \label{eq:pf:compute_differentials}
\mot{X-W/X-Y} \rightarrow \mot{X-Y}[1] 
\rightarrow \mot{X-Y/X-Z}[1],
\end{equation}
for large enough closed subsets $W \subset Y \subset Z$ of $X$
such that $\mathrm{codim}_X(Z)=p$, $\mathrm{codim}_X(Y)=p+1$
 and $\mathrm{codim}_X(W) \geq p+2$.
For the proof, we will consider $W \subset Y \subset Z$ as above,
 assume that $y \in Y$, $x \in Z$
 and study \eqref{eq:pf:compute_differentials} for
 $Z$, $Y$, $W$ large enough. 
To simplify the notations,
 we will replace $X$ by $X-W$ which means practically that 
 we can substract any subset of $X$ if it has codimension greater
 than $p+1$.

First of all, enlarging $Y$,
 we can assume that it contains the singular locus of $Z$. 
Because the singular locus of $Y$ has codimension greater than $p+1$
 in $X$, we can assume by reducing $X$ that $Y$ is smooth.
 Then, using the purity isomorphism,
  the composite map \eqref{eq:pf:compute_differentials}
 is isomorphic to the following one:
\begin{equation*} \label{eq2:pf:compute_differentials}
M(Y)\,\dtwist{p+1}
 \xrightarrow{\ \partial_{X,Y}\ } M(X-Y)[1] 
 \xrightarrow{\ i_Y^* \ } M(Z-Y)\,\dtwist p[1]
\end{equation*}
where $i_Y:(Z-Y) \rightarrow (X-Y)$ is
 the obvious restriction of the canonical closed immersion
 $i:Y \rightarrow Z$.

Let $Y_y$ (resp. $Z_x$) be the irreducible component
 of $Y$ (resp. $Z$) containing $y$ (resp. $x$). 
As $Y$ is smooth, we can write $Y=Y_y \sqcup Y'_y$
 where $Y'_y$ is the complement of $Y_y$ in $Y$.
As $(Z-Y)$ is smooth, if we put $\hat Y_x=Y \times_Z Z_x$
 then $(Z_x-\hat Y_x)$ is a connected component of $(Z-Y)$.
 We denote by $i_x:(Z_x-\hat Y_x) \rightarrow (X-Y)$
 the obvious restriction of $i_Y$.
According to \cite[Prop. 1.36]{Deg6},
 the following diagram is commutative:
$$
\xymatrix@C=44pt{
M(Y)\,\dtwist{p+1}\ar^-{\partial_{X,Y}}[r]\ar@{->>}[d]
 & M(X-Y)[1]\ar^-{i_Y^*}[r]\ar@{=}[d]
 & M(Z-Y)\,\dtwist p[1]\ar@{->>}[d] \\
M(Y_y)\,\dtwist{p+1}\ar^-{\partial_{X-Y'_y,Y_y}}[r]
 \ar@/_10pt/_{\partial_{Y,y}^{Z,x}}[rr]
 & M(X-Y)[1]\ar^-{i_x^*}[r]
 & M(Z_x-\hat Y_x)\,\dtwist p[1]
}
$$
where the vertical maps are the canonical projections.
The proposition is equivalent to show
 that the formal projective limit of the maps
 $\partial_{Y,y}^{Z,x}$ for $Z$, $Y$, $W$ large enough
 is equal to $\partial_y^x$
 (remember we have identified $X$ with $X-W$).

\bigskip

Assume that $y$ is not a specialisation of $x$. 
 Then $Y_y \cap Z_x$ has codimension greater 
 than $p+1$ in $X$. Therefore, reducing $X$ again,
 we can assume $Y_y \cap Z_x=\emptyset$.
 A fortiori, $Y_y \cap (Z_x-\hat Y_x)=\emptyset$
 and we get the following cartesian square of closed immersions:
$$
\xymatrix@=20pt{
\emptyset\ar[r]\ar[d]
 & Y_y\ar^{}[d]  \\
(Z_x-\hat Y_x)\ar^-{}[r] & (X-Y'_y).
}
$$
Then, according to the naturality of residues with respect to
the preceding square (relation (2) of \cite[Th. 1.34]{Deg6}),
 we obtain:
  $\partial_{X-Y'_y,Y_y} \circ i_x^*=0$.
 Thus the proposition is proved in that case.

We now consider the case where $y$ is a specialisation
 of $x$ \emph{i.e.} $Y_y \subset Z_x$.
Then $Y_y \subset \hat Y_x$: to simplify the notation,
 we can assume that $Z=Z_x$ 
 \emph{i.e.} $Z$ is irreducible with generic point $x$.
Let $f:\tilde Z \rightarrow Z$ be the normalization of $Z$.
The singular locus $\tilde Z_{sing}$ of $\tilde Z$
is everywhere of codimension greater than $1$ in $\tilde Z$.
Thus, $f(\tilde Z_{sing})$ is everywhere of codimension 
greater than $p+1$ in $X$,
and we can assume by reducing $X$ again 
that $\tilde Z$ is smooth.

Let us denote by $\tilde Y$ (resp. $\tilde Y_y$, $\tilde Y'_y$)
the reduced inverse image of $Y$ (resp. $Y_y$, $Y'_y$)
along $f$. Reducing $X$ again,
 we can assume that $\tilde Y_y$ is smooth
 and $\tilde Y_y \cap \tilde Y'_y=\emptyset$.
 Moreover,
 we can assume that every connected component of $\tilde Y_y$
 dominates $Y_y$ (by reducing $X$, we can throw away the non dominant
 connected components).
 In other words, the map $g_y:\tilde Y_y \rightarrow Y_y$
 induced by $f$ is finite and equidimensional.
 Then we can consider the following topologically cartesian square:
$$
\xymatrix@=18pt{
\tilde Y_y\ar^-{\tilde \sigma}[r]\ar_{g_y}[d]
 & (\tilde Z-\tilde Y'_y)\ar[d] \\
Y_y\ar^-\sigma[r] & (X-Y'_y)
}
$$
where $\sigma$ and $\tilde \sigma$ are the obvious closed immersions
 and the right vertical map is induced by the composite map
$\tilde Z \xrightarrow f Z \xrightarrow i X$. 
Note that taking the respective complements
 of $\tilde \sigma$ and $\sigma$
 in the source and target of this composite map,
 it induces the following one:
$$
(\tilde Z-\tilde Y) \xrightarrow h (Z-Y) \xrightarrow i (X-Y).
$$
Thus,
 applying the naturality of residues with respect to Gysin morphisms
 (\cite[Prop. 2.13]{Deg6}) to the preceding square on the one hand
 and the functoriality of the Gysin morphism (\cite[Prop. 2.9]{Deg6})
 on the other hand,
 we obtain the following commutative diagram:
$$
\xymatrix@R=18pt@C=45pt{
\mot{Y_y}\dtwist{p+1}\ar_-{\partial_{X-Y'_y,Y_y}}[r]\ar@{=}[d]
  \ar@/^12pt/^{\partial_{Y,y}^{Z,x}}[rr]
 & \mot{X-Y}[1]\ar_-{i^*}[r] 
 & \mot{Z-Y}\dtwist p[1]\ar^-{h^*}[d] \\
\mot{Y_y}\dtwist{p+1}\ar_-{g_y^*}[r]
 &\mot{\tilde Y_y}\dtwist{p+1}\ar_-{\partial_{\tilde Z-\tilde Y'_y,\tilde Y_y}}[r]
 & \mot{\tilde Z-\tilde Y}\dtwist p[1].
}
$$
Note that the set of connected components
 of the smooth scheme $\tilde Y_y$
 corresponds bijectively to the set $f^{-1}(y)$. For any $t \in f^{-1}(y)$,
 we denote by $\tilde Y_t$ the corresponding connected component
 so that $\tilde Y_y=\sqcup_{t \in f^{-1}(y)} \tilde Y_t$.
 Note that $\tilde Y_t$ is also a connected component of $\tilde Y$.
 We put:
$$
\tilde Z_t=\tilde Z-(\tilde Y-\tilde Y_t).
$$
This is an open subscheme of $\tilde Z$ containing $\tilde Y_t$ and
 $(\tilde Z_t-\tilde Y_t)=(\tilde Z-\tilde Y)$.
 Finally applying the additivity properties of Gysin morphisms
  and residues
 (\cite[Prop. 1.36]{Deg6}),
 we obtain the following commutative squares:
$$
\xymatrix@R=18pt@C=45pt{
\mot{Y_y}\dtwist{p+1}\ar^-{g_y^*}[r]\ar@{=}[d]
 &\mot{\tilde Y_y}\dtwist{p+1}\ar^-{\partial_{\tilde Z-\tilde Y'_y,\tilde Y_y}}[r]
 & \mot{\tilde Z-\tilde Y}\dtwist p[1]\ar@{=}[d] \\
\mot{Y_y}\dtwist{p+1}\ar^-{\sum_t g_t^*}[r]\ar@/_16pt/_{\tilde \partial_{Y,y}^{Z,x}}[rr]
 & \underset{t \in f^{-1}(y)}{\bigoplus}\mot{\tilde Y_t}\dtwist{p+1}
    \ar_\sim[u]
  \ar^-{\sum_t \partial_{\tilde Z_t,\tilde Y_t}}[r]
 & \mot{\tilde Z-\tilde Y}\dtwist p[1]
}
$$
where the middle vertical map is the canonical isomorphism.
We can now identify $\partial_y^x$
 with the formal projective limit of $\tilde \partial_{Y,y}^{Z,x}$
 for $Y$, $W$ large enough (remember we have assumed $Z=Z_x$).
 In view of formula \eqref{eq:residues_pt}, this is justified because:
\item[\indent -] $h$ is birational and $(\tilde Z-\tilde Y)$ is
 a smooth model of $\kappa(x)$.
\item[\indent -] The closed pair $(\tilde Z_t,\tilde Y_t)$
 is smooth of codimension $1$ and the local ring of $\mathcal O_{\tilde Z_t,\tilde Y_t}$
 is isomorphic (through $h$) to the valuation ring $\mathcal O_{v_t}$
 corresponding to the valuation $v_t$ on $\kappa(x)$
 considered in Paragraph \ref{df:differentielles}.
%
\end{proof}


\section{Cohomological realization}

We fix a Grothendieck abelian category $\A$
and consider a cohomological functor
$$
H:\dmgm^{op} \rightarrow \A,
$$
simply called a {\it realization functor}.
To such a functor,
 we associate a twisted cohomology theory:
 for a smooth scheme $X$ 
  and any pair of integers $(i,n) \in \ZZ^2$, we put:
$$
H^{i,n}(X)=H\big( \mot X(-n)[-i] \big).
$$
By the very definition, this functor is contravariant,
not only with respect to morphisms of smooth schemes but
also for finite correspondences. 
 Using Gysin morphisms \eqref{eq:gysin},
  it is also covariant with respect to projective morphisms.

\subsection{The coniveau spectral sequence}

\begin{num}
The functor $H$ admits an obvious extension to pro-objects
\begin{equation} \label{H_extended_pro}
\bar H:\pro{\dmgm}^{op} \rightarrow \A
\end{equation}
which sends pro-distinguished triangles to long exact 
sequences since right filtering colimits are exact in $\A$.
Moreover, for any function fields $E/k$, we simply put
\begin{equation} \label{eq:unramified_cycle_module}
\hat H^{i,n}(E):=\ilim{A \subset E} H^{i,n}(\spec A)
\end{equation}
where the limit is taken over the models of the function field $E/k$
 (see Paragraph \ref{num:model}).

Fix an integer $n \in \ZZ$.
We apply the functor $\bar H(?(n))$
to the pro-exact couple of Definition \ref{coniv_exact_couple}.
We then obtain a converging spectral sequence 
which, according to Lemma \ref{terms_coniveau}, has the form:
\begin{equation}
\label{coniv_spectral}
E_1^{p,q}(X,n)=\bigoplus_{x \in X^{(p)}}
 \hat H^{q-p,n-p}(\kappa(x))
  \Rightarrow H^{p+q,n}(X).
\end{equation}
This is a spectral sequence which converges to the so-called 
 \emph{coniveau filtration} on the twisted cohomology $H^{**}$
 defined by:
\begin{equation} \label{eq:coniveau_filtration}
N^rH^{i,n}(X)= \bigcap_{U \subset X, \ \mathrm{codim}_X(X-U) \geq r}
\mathrm{Ker}\left(H^{i,n}(X) \rightarrow H^{i,n}(U)\right).
\end{equation}
We also call the above spectral sequence the coniveau spectral sequence
 of $X$ with coefficients in $H$.
\end{num}


\subsection{Cycle modules}

\label{cycle_modules}

We now recall below the theory of Rost cycle modules
 in a way adapted to our needs.

\num The first step in Rost's theory is the notion of a
 {\it cycle premodule}. As already mentioned in the introduction,
 it is a covariant functor from the category of function fields to the category of
 graded abelian groups satisfying an enriched functoriality exactly analog 
 to that of Milnor K-theory $K_*^M$.
In our context, we will define\footnote{Indeed, when $\A$ is the category
 of abelian groups, it is proved in \cite[th. 5.1.1]{Deg5bis} 
 that such a functor defines a cycle premodule in the sense of M.~Rost.}
 a cycle premodule as a functor
$$
\phi:\dmgmo^{op} \rightarrow \A.
$$
Usually, we put
\begin{equation} \label{eq:grad_cycle-module}
\phi(M(E)(-n)[-n])=\phi_n(E)
\end{equation}
 so that $\phi$ becomes
a graded functor on function fields.
In view of the description of the functoriality of generic
motives recalled in \ref{functoriality_generic_motive},
 $\phi$ is equipped with the following structural maps:
\begin{enumerate}
\item For any extension of function fields, $\varphi:E \rightarrow L$,
 a \emph{corestriction} $\varphi_*:\phi_*(E) \rightarrow \phi_*(L)$ of  degree $0$.
\item For any finite extension of function fields, $\varphi:E \rightarrow L$,
 a \emph{restriction} $\varphi^*:\phi_*(L) \rightarrow \phi_*(E)$ of  degree $0$,
  also denoted by $N_{L/E}$.
\item For any function field $E$, $\phi_*(E)$ admits a $K_*^M(E)$-graded 
module structure.
\item For any valued function field $(E,v)$ with ring of integers 
essentially of finite type over $k$ and residue field $\kappa(v)$,
 a \emph{residue} $\partial_v:\phi_*(E) \rightarrow \phi_*(\kappa(v))$ of degree $-1$.
\end{enumerate}

\begin{df} \label{generic_transform4real}
For any pair of integers $(q,n)$, we associate with the realization functor $H$
 a cycle premodule $\hat H^{q,n}$ 
 as the restriction of the functor $\bar H(?(n)[p])$ to the category $\dmgmo$,
 using notation \eqref{H_extended_pro}.
\end{df}
According to formula \eqref{eq:grad_cycle-module},
 cycle modules are $\ZZ$-graded. 
 This motivates the following redundant notation
 for a smooth scheme $X$ and a triple of integers $(q,n,r)$:
\begin{equation} \label{eq:Rost_grading_cohomology}
H^{q,n}_r(X):=H^{q+r,n+r}(X).
\end{equation}
Note that we thus get a twisted cohomology $H_*^{**}$ with values
 in the $\ZZ$-graded category $\A^\ZZ$.
 Moreover, we obtain using the motivic coniveau exact couple 
 a spectral sequence of the form:
\begin{equation}
\label{coniv_spectral_graded}
E_1^{p,q}(X,n)=\bigoplus_{x \in X^{(p)}}
 \hat H^{q-p,n-p}_*(\kappa(x))
  \Rightarrow H^{p+q,n}_*(X),
\end{equation}
with values in $\A^\ZZ$.
Its $0$-th graded part is 
 the spectral sequence \eqref{coniv_spectral_graded}. \\
For any function field $E$ and any triple of integers $(q,n,r)$,
 we accordingly write:
\begin{equation} \label{Rost_grading_unramified_cycle_module}
\hat H^{q,n}_{r}(E)=\hat H^{q+r,n+r}(E)
\end{equation}
with the notation of formula \eqref{eq:unramified_cycle_module}.

\begin{rem} \label{rem:relation_generic}
Taking care of the canonical grading on cycle modules,
 the family of cycle modules defined above
 comes in with the following relation:
\begin{equation*} \label{relation_generic}
\forall a \in \ZZ, \hat H^{q+a,n+a}_{*}=\hat H^{q,n}_{*-a}.
\end{equation*}
\end{rem}

\num \label{num:cycle_complex}
Recall the aim of the axioms of a cycle module is to build a complex
 (cf \cite[(2.1)]{Ros}).
We recall these axioms to the reader using the morphisms
 introduced in Paragraph \ref{df:differentielles}.
We say that a cycle premodule $\phi$ is a {\it cycle module}
 if the following two conditions are fulfilled~:
\begin{enumerate}
\item[(FD)] Let $X$ be a normal scheme essentially of finite type over $k$,
 $\eta$ its generic point and $E$ its functions field.
Then for any element $\rho \in \phi_i(E)$,
 $\phi(\partial_x^\eta)(\rho)=0$
for all but finitely many points $x$ of codimension $1$ in $X$.
\item[(C)] Let $X$ be an integral local scheme essentially of finite 
type over $k$ and of dimension $2$.
Let $\eta$ (resp. $s$) be its generic (resp. closed) point,
 and $E$ (resp. $\kappa$) be its function (resp. residue) field.
Then, for any integer $n \in \ZZ$, the morphism
$$
\sum_{x \in X^{(1)}} \phi_{n-1}(\partial_s^x) \circ \phi_n(\partial_x^\eta):
 \phi_n(E) \rightarrow \phi_{n-2}(\kappa),
$$
well defined under (FD), is zero.
\end{enumerate}
When these conditions are fulfilled, 
for any scheme $X$ essentially of finite type over $k$,
Rost defines in
\cite[(3.2)]{Ros} a graded complex of
{\it cycles with coefficients in $\phi$} whose $i$-th graded\footnote{
This graduation follows the convention of \cite[\S 5]{Ros} except
for the notation. The notation $C^p(X;\phi,i)$ used by Rost would introduce
a confusion with twists.}
  $p$-cochains are
\begin{equation} \label{eq:C^p_modl}
C^p(X;\phi)_i=\bigoplus_{x \in X^{(p)}} \phi_{i-p}(\kappa(x))
\end{equation}
and with $p$-th differential equal to the well defined morphism
\begin{equation} \label{eq:diff_modl}
d^p=\sum_{(x,y) \in X^{(p)} \times X^{(p+1)}} \phi(\partial_x^y).
\end{equation}
The cohomology groups of this complex are called the 
\emph{Chow groups with coefficients in $\phi$}
and denoted by $A^*(X;\phi)$ in \cite{Ros}.
Actually, $A^*(X;\phi)$ is bigraded according to the bigraduation on $C^*(X;\phi)$.

\begin{num} \label{num:prelim:coniveau&Rost}
Consider the cycle module $\hat H_*^{q,n}$ introduced
 in Definition \ref{generic_transform4real} with
 its $\ZZ$-graduation given by formula \eqref{eq:grad_cycle-module}.
 According to this definition, the $E_1$-term of the spectral sequence
 \eqref{coniv_spectral_graded} can be written as:
$$
E_1^{p,q}(X,n)=C^p(X,\hat H_*^{q,n})
$$
if we use the formula \eqref{eq:C^p_modl} for the right hand side.
Moreover, according to Proposition \ref{compute_differentials},
 the differential $d_1^{p,q}$ of the spectral sequence is given
 by the formula:
$$
d_1^{p,q}=\sum_{(x,y) \in X^{(p)} \times X^{(p+1)}}
 \hat H_*^{q,n}(\partial_y^x).
$$
This is precisely the formula \eqref{eq:diff_modl}
 for the cycle premodule $\hat H^{q,n}$.
 Thus proposition \emph{loc. cit.} implies in particular
 that this morphism is well defined.
 Moreover it shows that for any integer $r$,
 the graded abelian group $C^*(X,\hat H^{q,n}_r)$
 together with the well defined differentials of the form \eqref{eq:diff_modl}
 is a complex. We deduce from this fact the following proposition:
\end{num}
\begin{prop} \label{realisation:coniveau&Rost}
Consider the previous notations.
\begin{enumerate}
\item[(i)]
For any integer $q \in \ZZ$,
 the cycle premodule $\hat H_*^{q,n}$ is a cycle module.
\item[(ii)]
For any smooth scheme $X$
 and any couple $(q,n)$ of integers,
there is an equality of complexes:
$$
E_1^{*,q}(X,n) = C^*(X;\hat H_*^{q,n}),
$$
where the left hand side is the complex
 made by the $q$-th line of the first page of 
 the spectral sequence \eqref{coniv_spectral_graded}.
\end{enumerate}
\end{prop}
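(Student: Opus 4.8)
The plan is to deduce both parts of Proposition \ref{realisation:coniveau&Rost} from the computation of the differentials already carried out in Proposition \ref{compute_differentials}, combined with the formalism of the coniveau spectral sequence \eqref{coniv_spectral_graded}. The essential observation, recorded in Paragraph \ref{num:prelim:coniveau&Rost}, is that Lemma \ref{terms_coniveau} identifies the $E_1$-term of the spectral sequence with the module of $p$-cochains $C^p(X;\hat H_*^{q,n})$ of formula \eqref{eq:C^p_modl}, while Proposition \ref{compute_differentials} identifies the differential $d_1^{p,q}$, component by component, with the sum $\sum_{(x,y)} \hat H_*^{q,n}(\partial_y^x)$, which is exactly the Rost differential \eqref{eq:diff_modl}. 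Since the spectral sequence is known to converge and in particular its $E_1$-differentials satisfy $d_1 \circ d_1 = 0$, this already forces the Rost differential to square to zero on smooth schemes.

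For part (ii), I would argue that the identification is essentially tautological once the two facts above are in place: applying the realization functor $\bar H(?(n))$ to the motivic coniveau exact couple (Definition \ref{coniv_exact_couple}) produces, by construction, the spectral sequence \eqref{coniv_spectral_graded} whose $E_1$-page is $H(E_{p,q})$, and Lemma \ref{terms_coniveau} rewrites $E_{p,q} = Gr_p^M(X)[-p-q]$ as the product $\pprod{x \in X^{(p)}} M(\kappa(x))(p)[2p]$. Pushing this through $\bar H(?(n))$ gives precisely $\bigoplus_{x \in X^{(p)}} \hat H_*^{q,n}(\kappa(x)) = C^p(X;\hat H_*^{q,n})$, using the grading conventions \eqref{eq:grad_cycle-module} and \eqref{eq:Rost_grading_cohomology}. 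The first-page differential of this spectral sequence is $H$ applied to the motivic differential $d_{p+1,-p-1}$ of \eqref{eq:df:diff}, which Proposition \ref{compute_differentials} identifies with the collection of residue morphisms $\partial_y^x$; thus $d_1^{p,q} = d^p$ as maps, giving the claimed equality of complexes.

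Part (i) is the more substantial point, and here the strategy is to verify axioms (FD) and (C) of Paragraph \ref{num:cycle_complex} directly rather than by invoking Rost's comparison or the perfectness argument used in \cite{Deg5bis}. Axiom (FD) follows from the first cycle premodule axiom already noted in the introduction: the divisor map lands in the direct sum, so for a fixed class $\rho$ only finitely many residues $\partial_x^\eta(\rho)$ are nonzero; concretely this is guaranteed because each relevant residue factors through the purity isomorphisms of a smooth model, where finiteness is built into the pro-object structure. For axiom (C), the key is that for a smooth scheme $X$ the relation $d_1 \circ d_1 = 0$ — valid because \eqref{coniv_spectral_graded} is a genuine spectral sequence arising from a triangulated exact couple, where $d = \beta \circ \gamma$ satisfies $d^2 = \beta \gamma \beta \gamma = 0$ — translates, via the identification of part (ii), into the vanishing of the composite $\sum_x \phi(\partial_s^x) \circ \phi(\partial_x^\eta)$ that axiom (C) demands. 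I expect the main obstacle to be the reduction from the local two-dimensional scheme appearing in axiom (C) to the smooth global situation where $d^2 = 0$ is available: axiom (C) is stated for an \emph{integral local} scheme essentially of finite type of dimension $2$, whereas the spectral sequence machinery directly yields $d^2 = 0$ only for smooth $X$. The plan is to handle this by choosing, as in Paragraph \ref{df:differentielles}, a smooth model containing the relevant codimension-$1$ and codimension-$2$ points, embedding the local computation into the $E_1$-differentials of that model's coniveau spectral sequence, and then passing to the limit over such models — using that $k$ is perfect to secure the smooth models and that the residue morphisms are compatible with the pro-object structure.
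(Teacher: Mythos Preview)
Your treatment of part (ii) matches the paper's and is correct: it follows directly from the identifications recorded in Paragraph \ref{num:prelim:coniveau&Rost}. Your overall strategy for part (i) --- deducing (FD) and (C) from the relation $d_1^2=0$ on the coniveau spectral sequence of a \emph{smooth} scheme --- is also the paper's. The gap is in your reduction step.

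For axiom (C), the integral local $2$-dimensional scheme $X$ may be singular at its closed point $s$, and then there is no smooth scheme containing $s$ as a codimension-$2$ point with the same local data; perfectness of $k$ gives smooth models of function fields, not of singular germs, so your plan of ``choosing a smooth model containing the relevant codimension-$1$ and codimension-$2$ points'' cannot be carried out. Your sentence on (FD) is also problematic: the ``first axiom of a cycle module'' you invoke from the introduction \emph{is} (FD), so the appeal is circular, and ``finiteness built into the pro-object structure'' is not an argument. The paper's device --- which you are missing --- is to reduce $X$ to the affine finite-type case and then embed it as a \emph{closed subscheme} of an affine space $\mathbb A^r_k$. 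Because $X$ is closed in $\mathbb A^r_k$, every specialization in $\mathbb A^r_k$ of a point of $X$ again lies in $X$; hence the inclusion of summands gives a monomorphism $C^p(X;\hat H_*^{q,n}) \hookrightarrow C^*(\mathbb A^r_k;\hat H_*^{q,n})$ compatible with the differentials \eqref{eq:diff_modl}. Since $\mathbb A^r_k$ is smooth, part (ii) applies to it and yields $d^2=0$ there, hence on the sub-graded-group coming from $X$; this is (C). The same embedding handles (FD): the differential on $C^*(\mathbb A^r_k;\hat H_*^{q,n})$ lands in a direct sum because it is the $E_1$-differential of an honest spectral sequence, and the residues $\partial_x^\eta$ for $x\in X^{(1)}$ appear among its components.
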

\begin{proof}
Point (ii) follows from Preliminary \ref{num:prelim:coniveau&Rost}.

We prove point (i), axiom (FD). Consider a normal scheme $X$ essentially
 of finite type over $k$. We can assume it is affine of finite type.
Then there exists a closed immersion $X \xrightarrow i \dtenx r k$ for an integer
 $r \geq 0$. 
According to the preliminary discussion of
 Paragraph \ref{num:prelim:coniveau&Rost},
 $C^*(\dtenx r k;\hat H_*^{q,n})$ is a well defined complex.
Thus, axiom (FD) for the cycle premodule $\hat H_*^{q,n}$ follows from the fact 
 the immersion $i$ induces an inclusion
$$
\hat H_*^{q,n}(E) \subset C^r(\dtenx r k;\hat H_*^{q,n})
$$
and the definition of the differentials given above. \\
For axiom (C), we consider an integral local scheme $X$ essentially of finite 
type over $k$ and of dimension $2$. We have to prove that $C^*(X;\hat H_*^{q,n})$ is a 
complex -- the differentials are well defined according to (FD).
To this aim, we can assume $X$ is affine of finite type over $k$. 
Then, there exists a closed immersion $X \rightarrow \dtenx r k$.
From the definition given above, we obtain a monomorphism
$$
C^p(X;\hat H_*^{q,n}) \rightarrow C^p(\dtenx r k;\hat H_*^{q,n})
$$
which is compatible with differentials. Thus the conclusion follows
again from the preliminary discussion of Paragraph \ref{num:prelim:coniveau&Rost}.
\end{proof}
\rem This proposition gives a direct proof of \cite[Th. 6.2.1]{Deg5bis}.

\begin{cor} \label{cor:coniv_spectral2}
Using the notations of the previous proposition,
 the $E_2$-terms of the coniveau spectral sequence \eqref{coniv_spectral_graded} 
  are~:
$$
E_2^{p,q}(X,n)=A^p(X;\hat H_*^{q,n}) \Rightarrow H_*^{p+q,n}(X).
$$
Moreover, for any couple of integers $(q,n)$ and any smooth proper scheme $X$,
 the term $E_2^{0,q}(X,n)$ is a birational invariant of $X$.
\end{cor}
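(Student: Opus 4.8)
The plan is to read off the $E_2$-identification directly from the preceding Proposition \ref{realisation:coniveau&Rost}, and then to handle the birational invariance of the diagonal term $E_2^{0,q}$ by a separate geometric argument. For the first assertion, part (ii) of the proposition gives an equality of complexes $E_1^{*,q}(X,n) = C^*(X;\hat H_*^{q,n})$, line by line in $q$. By definition the $E_2$-page is the cohomology of the $E_1$-complex with respect to the differential $d_1$, and by part (i) together with the formula for $d_1^{p,q}$ recorded in Paragraph \ref{num:prelim:coniveau&Rost}, this differential coincides with Rost's differential $d^p$ of \eqref{eq:diff_modl}. Hence taking cohomology in degree $p$ yields $E_2^{p,q}(X,n) = A^p(X;\hat H_*^{q,n})$ by the very definition of the Chow groups with coefficients $A^*(X;\hat H_*^{q,n})$. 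The convergence to $H_*^{p+q,n}(X)$ is inherited from the convergence of the coniveau spectral sequence \eqref{coniv_spectral_graded}, which holds because the tower defining the motivic coniveau exact couple is bounded and exhaustive.

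For the birational invariance of $E_2^{0,q}(X,n)$, the first step is to unwind what this term is: by the formula just proved it equals $A^0(X;\hat H_*^{q,n})$, the kernel of the first differential $d^0 : \bigoplus_{x \in X^{(0)}} \hat H_*^{q,n}(\kappa(x)) \to \bigoplus_{y \in X^{(1)}} \hat H_*^{q,n}(\kappa(y))$. Since $X$ is smooth and connected with generic point $\eta$ and function field $E$, the degree-zero piece $C^0(X;\hat H_*^{q,n})$ is just $\hat H_*^{q,n}(E)$, so $E_2^{0,q}(X,n)$ is the subgroup of $\hat H_*^{q,n}(E)$ consisting of \emph{unramified} elements, i.e.\ those killed by every residue $\partial_{v_x}$ at codimension-one points $x \in X^{(1)}$. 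The key observation is that this subgroup depends only on the data of the function field $E$ together with the set of geometric discrete valuations arising from codimension-one points of smooth proper models.

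The main obstacle, and the heart of the argument, is to show this unramified subgroup is a birational invariant among smooth proper $X$. I would argue as follows. Two smooth proper models with the same function field $E$ can be compared, and an element of $\hat H_*^{q,n}(E)$ unramified on one model is unramified on the other: the point is that for a smooth proper $X$, \emph{every} divisorial valuation of $E/k$ with a center on $X$ gives a residue condition, and properness (valuative criterion) guarantees that any such valuation does have a center. Concretely, if $v$ is a discrete valuation of $E$ trivial on $k$, then on a smooth proper $X$ the valuation $v$ specializes into a codimension-one point precisely when its center has codimension one; using that a birational map between smooth proper varieties is an isomorphism in codimension one (by the valuative criterion and smoothness, no divisor is contracted), the collection of codimension-one residue conditions imposed by $X$ agrees with that imposed by any other smooth proper model. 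Therefore the kernel $A^0(X;\hat H_*^{q,n})$ is the same subgroup of $\hat H_*^{q,n}(E)$ for all smooth proper $X$ with function field $E$, which is exactly the asserted birational invariance. I would close by remarking that this is the motivic analogue of the classical statement that unramified cohomology is a birational invariant of smooth proper varieties, and that the residue functoriality established in Paragraph \ref{functoriality_generic_motive} and Proposition \ref{compute_differentials} is what makes the residue conditions intrinsic to $E$.
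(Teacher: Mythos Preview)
Your derivation of the $E_2$-identification is correct and matches the paper: it is an immediate consequence of Proposition \ref{realisation:coniveau&Rost}(ii), since $A^p$ is by definition the cohomology of Rost's complex $C^*$.

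Your argument for birational invariance, however, contains a genuine error. You assert that ``a birational map between smooth proper varieties is an isomorphism in codimension one (by the valuative criterion and smoothness, no divisor is contracted)'', and conclude that the set of codimension-one valuations imposed by $X$ coincides with that imposed by any other smooth proper model. This is false: the blow-up $\mathrm{Bl}_p\PP^2 \to \PP^2$ is a birational morphism of smooth proper varieties that contracts the exceptional divisor. What the valuative criterion actually gives is that a birational \emph{rational map} $X \dashrightarrow X'$ is \emph{defined} outside codimension~$2$ in $X$; it says nothing about the map being an isomorphism there, and in particular the collection $\{v_x : x \in X^{(1)}\}$ genuinely depends on the model. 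So the unramified subgroup you wrote down is \emph{a priori} model-dependent, and your argument does not close this gap.

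The paper does not give a direct argument here; it simply invokes \cite[12.10]{Ros}. Rost's proof uses substantially more of the cycle-module machinery than you have brought to bear --- in particular homotopy invariance of $A^0$ and the pullback functoriality for morphisms of smooth schemes (items recalled in Paragraph \ref{funct_Chow}). The underlying mechanism is a purity/specialization statement: if $\alpha \in \hat H_*^{q,n}(E)$ is unramified at every codimension-one point of a smooth proper $X$, and $v$ is an arbitrary divisorial valuation of $E$ with center $x \in X$, then unramifiedness at all height-one primes of $\mathcal O_{X,x}$ forces $\partial_v(\alpha)=0$. Establishing this is the real content, and it does not follow from the valuative criterion alone. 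If you want to give a self-contained proof rather than cite Rost, that is the statement you need to supply.
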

The second assertion follows from \cite[12.10]{Ros}.

\begin{ex} \label{ex:motivic&Milnor}
Consider the functor $H_\M=\Hom_{\dmgm}(.,\ZZ)$,
 corresponding to motivic cohomology.
In this case, following \cite[3.2, 3.4]{SV}, for any function field $E$,
\begin{equation}
H^q_\M(E;\ZZ(p))=
\begin{cases}
0 & \text{if } q>p \text{ or } p<0 \\
K_p^M(E) & \text{if } q=p\geq 0
\end{cases}
\end{equation}
In particular, from Definition \ref{generic_transform4real},
$\hat H^{n,n}_\M=K_{*+n}^M$. 
In fact, this is an isomorphism of cycle modules. 
For the norm, this is \emph{loc. cit.} 3.4.1.
For the residue, it is sufficient (using for example 
\cite[formula (R3f)]{Ros})
to prove that 
for any valued function field $(E,v)$ with uniformizing parameter
$\pi$, $\partial_v(\pi)=1$ for the cycle module $\hat H^{n,n}_\M$.
This follows from \cite[2.6.5]{Deg5bis} as for any
morphism of smooth connected schemes $f:Y \rightarrow X$, 
the pullback $f^*:H^0_\M(X;\ZZ) \rightarrow H^0_\M(Y,\ZZ)$ is the
identity of $\ZZ$.

As remarked by Voevodsky at the very beginning of his theory,
 the vanishing mentioned above implies that the coniveau spectral 
sequence for $H_\M$ satisfies $E_1^{p,q}(X,n)=0$ if $p>n$ or $q>n$.
In particular, the spectral sequence gives an isomorphism:
$A^n(X;\hat H^{n,n})_0 \rightarrow H^{2n}_\M(X;\ZZ(n))$.
The left hand side is $A^n(X;K_*^M)_n$ which is nothing else
 than the Chow group $CH^n(X)$ of cycles modulo rational equivalence.
 This is precisely the proof of the isomorphism of Voevodsky:
\begin{equation} \label{eq:motivic&Chow}
H^{2n}_\M\big(X,\ZZ(n)\big)=CH^n(X).
\end{equation}
\end{ex}

\num \label{funct_Chow}
In the sequel, we will 
need the following functoriality of the Chow group
of cycles with coefficients in a cycle module $\phi$~:
\begin{itemize}
\item $A^*(.;\phi)$ is contravariant for flat morphisms (\cite[(3.5)]{Ros}).
\item $A^*(.;\phi)$ is covariant for proper morphisms (\cite[(3.4)]{Ros}).
\item For any smooth scheme $X$, $A^*(X;\phi)$ is a graded module over 
 $CH^*(X)$ (\cite[5.7 and 5.12]{Deg4bis}).
\item $A^*(.;\phi)$ is contravariant for morphisms between smooth schemes
 (\cite[\textsection 12]{Ros}).
\end{itemize}
Note that any morphism of cycle modules gives a transformation
 on the corresponding Chow group with coefficients
  which is compatible with the functorialities listed above.
Moreover, identifying $A^p(.;K_*^M)_p$ with $CH^p(.)$,
 following the preceding example,
  the structures above correspond to the usual structures on the Chow 
   group.
Finally, let us recall that the maps appearing
 in the first three points above are defined
 at the level of the complexes $C^*(.;\phi)$ (introduced in \ref{num:cycle_complex}).

%
%

In \cite{BO}, the authors expressed the $E_2$-term of
the coniveau spectral sequence as the Zariski cohomology of a
well defined sheaf. We get the same result in the motivic setting.
Let $\cH_*^{q,n}$ be the presheaf of graded abelian groups
 on the category of smooth schemes such that
\begin{equation} \label{eq:unramif_cohomology}
\Gamma\big(X;\cH_*^{q,n})\big):=A^0\big(X;\hat H_*^{q,n}\big).
\end{equation}
Classically, this group is called the {\it $n$-th twisted unramified cohomology} 
of $X$ with coefficients in $H$.
\begin{prop} \label{prop:coniv_spectral3}
Consider the notations above.
\begin{enumerate}
\item The presheaf $\cH_*^{q,n}$ 
 has a canonical structure of a homotopy invariant sheaf with transfers.
\item There is a isomorphism of abelian groups:
$$
A^p(X;\hat H^{q,n}_*)=H^p_\mathrm{Zar}(X;\cH_*^{q,n})
$$
which is natural with respect to contravariant functoriality
 in the smooth scheme $X$.
\end{enumerate}
\end{prop}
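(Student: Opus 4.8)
The plan is to derive both assertions from the cycle-module structure established in Proposition \ref{realisation:coniveau&Rost}, transporting it to Voevodsky's setting via the equivalence of \cite{Deg9} and then invoking the Gersten resolution. For assertion (1), since Proposition \ref{realisation:coniveau&Rost}(i) shows that $\hat H_*^{q,n}$ is a genuine cycle module, I would apply the equivalence of \cite[Th. 3.4]{Deg9} between cycle modules over $k$ and the relevant category of ($\GG$-stable) graded homotopy invariant Nisnevich sheaves with transfers. Under it $\hat H_*^{q,n}$ corresponds to such a sheaf $F$, and the heart of the matter is to recognise that, for a smooth $X$, the sections $\Gamma(X;F)$ are exactly the unramified elements — the kernel of the divisor map $d^0_{X,\hat H_*^{q,n}}$, which is $A^0(X;\hat H_*^{q,n})=\Gamma(X;\cH_*^{q,n})$ by \eqref{eq:unramif_cohomology}. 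This identification endows $\cH_*^{q,n}$ with the structure of a homotopy invariant sheaf with transfers, the transfers being those induced by the corestriction/restriction maps of the premodule recalled in \ref{functoriality_generic_motive}; it also shows in passing that $\cH_*^{q,n}$ is a sheaf, a fact that could alternatively be read off from the localisation sequences of Rost's theory. The whole argument is performed degreewise in the canonical $\ZZ$-grading.

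For assertion (2) I would use that over a perfect field every homotopy invariant sheaf with transfers admits a Gersten (Cousin) resolution, by Voevodsky \cite[chap. 3, 4.37]{FSV}. Applied to $\cH_*^{q,n}$, this yields an exact complex of Zariski sheaves $0\to\cH_*^{q,n}\to\mathcal C^0\to\mathcal C^1\to\cdots$ whose terms are direct sums, over the points of a fixed codimension, of pushforwards of constant sheaves along the closures of those points. Evaluating on a smooth $X$ recovers precisely Rost's cycle complex: the $p$-th term is $\bigoplus_{x\in X^{(p)}}\hat H_{*-p}^{q,n}(\kappa(x))=C^p(X;\hat H_*^{q,n})$, and by Proposition \ref{compute_differentials} the differentials agree with those of \eqref{eq:diff_modl}. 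Since each $\mathcal C^p$ is flasque, hence acyclic for Zariski cohomology, the resolution computes $H^*_\mathrm{Zar}(X;\cH_*^{q,n})$, whence the desired isomorphism $H^p_\mathrm{Zar}(X;\cH_*^{q,n})\cong A^p(X;\hat H_*^{q,n})$. Naturality in $X$ is automatic, since the resolution, the cycle complex, and the identification of assertion (1) are all functorial for morphisms between smooth schemes, using the contravariance recorded in \ref{funct_Chow}.

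The step I expect to be the main obstacle is the identification in assertion (1) of the abstract sheaf produced by the equivalence \cite{Deg9} with the concrete unramified presheaf $X\mapsto A^0(X;\hat H_*^{q,n})$, while keeping careful track of the grading and of the Tate twists hidden in \eqref{eq:grad_cycle-module}; once this is in place, the transfer structure is inherited formally and assertion (2) becomes a routine consequence of the acyclicity of the Cousin resolution. A secondary point needing attention is to confirm that the differentials of Voevodsky's resolution coincide, on global sections, with Rost's $d^p$, which is again where Proposition \ref{compute_differentials} intervenes.
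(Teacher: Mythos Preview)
Your approach is correct but takes a more circuitous route than the paper. The paper's proof is essentially two citations: assertion (1) follows from Rost's homotopy invariance of $A^0(-;\phi)$ for a cycle module $\phi$ (\cite[(8.6)]{Ros}) together with the transfer structure supplied by \cite[6.9]{Deg4bis}; assertion (2) follows directly from Rost's local acyclicity result \cite[(2.6)]{Ros}, which says precisely that the sheafified cycle complex is a flasque resolution of its degree-zero cohomology sheaf. In other words, both statements are already available in Rost's framework once one knows $\hat H_*^{q,n}$ is a cycle module, and no passage through Voevodsky's category or the equivalence of \cite{Deg9} is needed.

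Your detour through \cite[Th.~3.4]{Deg9} and Voevodsky's Gersten resolution \cite[chap.~3, 4.37]{FSV} works, but it creates exactly the obstacle you flag: you must identify the abstract sheaf coming out of the equivalence with the concrete unramified presheaf $A^0(-;\hat H_*^{q,n})$, and then check that Voevodsky's Cousin differentials agree with Rost's $d^p$. The paper avoids both issues by staying inside Rost's theory, where the relevant sheaf is \emph{defined} as $A^0$ and the resolution is \emph{built} from $C^*$. What your route buys is a conceptual explanation via the equivalence of categories; what the paper's route buys is economy---the result is a formal consequence of standard properties of cycle modules with no identification step required.
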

\begin{proof}
The first assertion follows from \cite[(8.6)]{Ros}
 and \cite[6.9]{Deg4bis}
 while the second one follows from \cite[(2.6)]{Ros}.
\end{proof}

\begin{num}
Using the notations of the previous proposition,
 we have obtained the following form
 of the $\ZZ$-graded spectral sequence \eqref{coniv_spectral_graded}:
\begin{equation}
\label{coniv_spectral3}
E_2^{p,q}(X,n)=H^p_\mathrm{Zar}\big(X;\cH_*^{q,n}\big)
 \Rightarrow H^{p+q,n}_*(X).
\end{equation}
This is the analog of the Corollary 6.3 of \cite{BO}
 except for the definition of the sheaf $\cH_*^{q,n}$.
 However, using the argument of \emph{loc. cit.},
  we can recover the form considered by Bloch and Ogus
  for the sheaf $\cH^{q,n}_*$.
Indeed, the spectral sequence \eqref{coniv_spectral_graded}
 is natural with respect to open immersions.
 Thus, it can be sheafified for the Zariski topology
  and we obtain a spectral sequence of Zariski sheaves
  with coefficients in $\A^\ZZ$, converging
  to the Zariski sheaf $\tilde{\cH}_*^{q,n}$
  associated with the presheaf:
$$
X \mapsto H^{q,n}_*(X).
$$
According to the preceding computation of the $E_2$-term,
 we obtain that the sheafified spectral sequence is concentrated
 in the line $p=0$ from $E_2$ on.
 This gives an isomorphism $\tilde{\cH}_*^{q,n} \simeq \cH_*^{q,n}$
 as required. Let us state this:
\end{num}
\begin{prop} \label{prop:comput_unramified_sheaf}
The sheaf $\cH_*^{q,n}$ defined by formula \eqref{eq:unramif_cohomology}
 is equal to the Zariski sheaf on $\sm$ associated with $H^{q,n}_*$.
\end{prop}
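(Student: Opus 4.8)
The plan is to sheafify the graded coniveau spectral sequence \eqref{coniv_spectral_graded} for the Zariski topology and to read the assertion off from its degeneration, making precise the discussion preceding the statement.

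First I would record the functoriality needed to sheafify. The motivic coniveau exact couple of Definition \ref{coniv_exact_couple} is contravariant in open immersions of smooth schemes: an open immersion $j \colon U \hookrightarrow X$ restricts every flag on $X$ to a flag on $U$ and is compatible with the coniveau filtration $F_\bullet$. Applying the realization functor $\bar H$, the spectral sequence \eqref{coniv_spectral_graded} becomes a spectral sequence of presheaves of $\A^\ZZ$-objects on $\sm$ (restricted to open immersions), converging to the presheaf cohomology $X \mapsto H^{p+q,n}_*(X)$, filtered by the coniveau filtration \eqref{eq:coniveau_filtration}.

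Next I would apply the Zariski sheafification functor $a_{\mathrm{Zar}}$. As $a_{\mathrm{Zar}}$ is exact and $\A$ is a Grothendieck category, it carries the underlying exact couple to an exact couple of Zariski sheaves, hence yields a convergent spectral sequence of sheaves whose abutment in total degree $m$ is $a_{\mathrm{Zar}}(H^{m,n}_*)$, the sheaf associated with the presheaf $X \mapsto H^{m,n}_*(X)$. The heart of the argument is the sheafified $E_2$-page. By Proposition \ref{prop:coniv_spectral3}(ii) one has $E_2^{p,q}(X,n) = H^p_{\mathrm{Zar}}(X; \cH_*^{q,n})$; here I would invoke the standard vanishing that, for any Zariski sheaf $\mathcal F$, the presheaf $X \mapsto H^p_{\mathrm{Zar}}(X; \mathcal F)$ has trivial sheafification for $p > 0$. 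For $p = 0$ the term is $X \mapsto \Gamma(X; \cH_*^{q,n})$, which is already a sheaf, since $\cH_*^{q,n}$ is a homotopy invariant Nisnevich -- hence Zariski -- sheaf by Proposition \ref{prop:coniv_spectral3}(i). Thus the sheafified $E_2$-page is concentrated on the column $p = 0$, where it equals $\cH_*^{q,n}$.

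A spectral sequence concentrated on the single column $p = 0$ degenerates at $E_2$ and identifies its abutment in total degree $q$ with the sheaf $E_2^{0,q} = \cH_*^{q,n}$; this gives the isomorphism $a_{\mathrm{Zar}}(H^{q,n}_*) \simeq \cH_*^{q,n}$, which is the assertion. The only genuine obstacle is formal rather than geometric: one must check that $a_{\mathrm{Zar}}$ commutes with the formation of the exact couple and of its derived spectral sequence (guaranteed by exactness of $a_{\mathrm{Zar}}$ and of filtered colimits in $\A$), and that the sheafified coniveau filtration on the abutment remains exhaustive, so that the collapse on the line $p = 0$ genuinely kills the positive-coniveau contributions and computes $a_{\mathrm{Zar}}(H^{q,n}_*)$. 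The entire geometric content has already been packaged into Propositions \ref{realisation:coniveau&Rost} and \ref{prop:coniv_spectral3}.
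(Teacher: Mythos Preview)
Your proposal is correct and follows essentially the same approach as the paper's own argument (given in the paragraph preceding the proposition): sheafify the coniveau spectral sequence using its naturality in open immersions, invoke Proposition~\ref{prop:coniv_spectral3} to identify the $E_2$-page with Zariski cohomology of $\cH_*^{q,n}$, observe that the sheafification of $X \mapsto H^p_{\mathrm{Zar}}(X;\cH_*^{q,n})$ vanishes for $p>0$, and read off the isomorphism from the resulting degeneration. You have simply spelled out more of the routine verifications (exactness of $a_{\mathrm{Zar}}$, compatibility with the exact couple, exhaustiveness of the filtration) that the paper leaves implicit.
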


\begin{ex} \label{ex:MWC}
In \cite{CD3}, Cisinski and the author have introduced axioms
 on a presheaf of differential graded algebras $E$ over smooth affine schemes
 which guarantee the existence of a realization functor
$$
H_E:\dmgm^{op} \rightarrow K-vs
$$
such that for any smooth affine scheme $X$,
 $H_E(M(X)[-i])=H^i\big(E(X)\big)$.
 We call $E$ a \emph{mixed Weil theory}. 
A distinctive feature of the resulting cohomology
 is that it is periodic with respect to the twist ;
 in other words, there exists an isomorphism:
$$
\epsilon_M:H_E(M(1)) \rightarrow H_E(M)
$$
which can be chosen to be natural in $M$.

Let us summarize the properties obtained previously for this particular kind
 of realization:
\begin{itemize}
\item There exists canonical cycle modules $\hat H^{q,n}_{E,*}$
 such that for any function field $L/k$,
$$
\hat H^{q,n}_{E,r}(L):=\ilim{A/k} H^{q+r,n+r}\big(\spec A\big)
$$
where the limit runs over the models of $L/k$ -- see Par. \ref{num:model}.
In fact, this family of cycle modules is equivalent
 to only one of them according to the following isomorphism:
$$
\hat H^{q,n}_{E,*}=\hat H^{0,n-q}_{E,*-q} \simeq \hat H^{0,0}_{E,*-q}
$$
where the equality follows from Remark \ref{rem:relation_generic}
 and the isomorphism is induced by $\epsilon$.
\item Let $\cH^{q,n}_{E,*}$ be the Zariski sheaf on $\sm$
 associated with the presheaf $H^{q,n}_{E,*}$.
 Then $\cH^{q,n}_*$ has a canonical structure of a homotopy invariant sheaf
  with transfers. Moreover, $\cH^{q,n}_*(X)$ is constant on the birational
  class of a smooth proper scheme $X$.

There exists a spectral sequence associated with the cohomology $H_E$,
 converging to its coniveau filtration (recall formula \eqref{eq:coniveau_filtration}),
 of the following form:
$$
E_2^{p,q}(X,n)=A^p(X,\hat H^{q,n}_{E,*})=H^p_\mathrm{Zar}\big(X,\cH^{q,n}_{E,*}\big)
 \Rightarrow H^{p+q,n}_{E,*}(X).
$$
\end{itemize} 
\end{ex}

\begin{rem} \label{rigid_realization}
Assume $k$ is a perfect field of characteristic $p>0$,
 let $W$ be the associated Witt ring and denote by $K$ the fraction field of $W$.
 According to \cite[sec. 3.2]{CD2}, there exists a mixed Weil theory $E$
 such that:
\begin{itemize}
\item for any smooth affine scheme $X$, $H^n_E(X)$ is the Monsky-Washnitzer
 cohomology (of a lift of $X$ over $W$) -- see \cite{MW}.
\item for any smooth proper scheme $X$, $H^n_E(X)$ is the crystalline
 cohomology of $X/W$ tensored with $K$.
\end{itemize}
Then, the preceding example applied to this mixed Weil theory,
 together with forthcoming Corollary \ref{Bloch-Ogus_algebraic} gives the
 results stated in the end of the introduction.
\end{rem}

\begin{num} \textit{Regulators}.-- Consider again the situation and notations
 of the previous example. The algebra structure on $E$ induces an algebra structure
 on $H^{**}_E$. The unit of this structure $1 \in H^{0,0}_E(\spec k)$ corresponds
 to an element $\rho \in H_E(\ZZ)$ which induces a natural transformation
$$
\rho:H_\M \rightarrow H_E
$$
because motivic cohomology corresponds to the functor $\Hom_{\dmgm}(?,\ZZ)$.
 This is the \emph{regulator map} -- or rather, its extension to the full triangulated
 category of mixed motives $\dmgm$.
 According to the preceding construction, this map induces a natural transformation
 of cycle modules:
$$
\hat H_{\M,*}^{q,n} \rightarrow \hat H_{E,*}^{q,n}
$$
corresponding to what should be called \emph{higher symbols}.
Indeed, in the case $q=n$, it gives usual symbols for any function field $L$:
$$
K_n^M(L) \rightarrow \hat H^{n,n}_E(L).
$$
This higher symbol map is of course compatible with all the structures
 of a cycle module: corestriction, restriction, residues.
\end{num}

\subsection{Algebraic equivalence}

\begin{num} \label{num:axioms_realization}
In this section,
 we assume $\A$ is the category of $K$-vector spaces
 for a given field $K$.

Consider the cycle modules $\hat H^{q,n}_*$ associated with $H$ in Definition
 \ref{generic_transform4real}, together with their $\ZZ$-grading
 defined by formula \eqref{Rost_grading_unramified_cycle_module}.
 We introduce the following properties on the realization functor $H$:
\begin{description}
\item[(Vanishing)] For any function field $E$
 and any couple of negative integers $(q,n)$, $\hat H_0^{q,n}(E)=0$.
\item[(Rigidity)] The covariant functor $\hat H^{0,0}_0$ on function fields over $k$
 is the constant functor with value $K$.
\end{description}

Let us assume $H$ satisfies (Rigidity). 
 Then the unit element of the field $K$ determines
 an element $\rho$ of $H(\ZZ)$ through the identification
$$
K=\hat H^{0,0}_0(k)=H(\ZZ).
$$
We deduce from $\rho$, as in example \ref{ex:MWC},
 a canonical natural transformation of contravariant functors on $\dmgm$:
\begin{equation} \label{eq:higher_cyc_class}
\rho:\Hom_{\dmgm}(.,\ZZ) \rightarrow H.
\end{equation}
Of course, the source functor is nothing else
 than the realization functor which corresponds to motivic cohomology $H_\M$.
 In particular, taking of Voevodsky's isomorphism recalled in
 \eqref{eq:motivic&Chow}, we get a canonical cycle class:
$$
\rho^n_X:CH^n(X)_K \rightarrow H^{2n,n}(X).
$$
Let us denote by $Z^n(X,K)$ the group of $n$-codimensional $K$-cycles in $X$
 (simply called \emph{cycles} in what follows)
 and by $\K^n_{rat}(X)$ (resp. $\K^n_{alg}(X)$)
 its subgroup formed by cycles rationally (resp. algebraically)
 equivalent to $0$.
\end{num}
\begin{df}
Using the notations above,
 we define the group of cycles $H$-equivalent to $0$
 as:
$$
\K^n_H(X)=\{\alpha \in Z^n(X,K) \mid \rho^n_X(\alpha)=0\}.
$$
\end{df}

\begin{rem} \label{rem:symbol}
The map \eqref{eq:higher_cyc_class}
induces a morphism of cycle modules
$K_{*+a}^M \rightarrow \hat H^{a,a}_*$ which corresponds
to cohomological symbols
$K_a^M(E) \rightarrow \hat H^{a,a}(E)$
compatible with corestriction, restriction, residues and the
action of $K_*^M(E)$.
\end{rem}

\begin{num}
We analyze the coniveau spectral sequence \eqref{coniv_spectral}
 under the assumptions (Vanishing) and (Rigidity). 
 The $E_1$-term is described by the following picture:
\begin{center}
\unitlength=0.3cm
\begin{picture}(20,12)
\put(4,0){\vector(0,1){11}}
\put(0,4){\vector(1,0){12}}
\put(4,10){\line(1,-1){10}}
\put(4,10){\line(-1,1){1.5}}
\put(7,0){\line(0,1){11}}
\put(0,0){\line(1,1){11}}
\put(6.7,6.7){$\bullet$}
\put(12.2,3.8){$p$}
\put(3.8,11.5){$q$}
\put(7,3.9){\line(0,1){0.2}}
\put(10,3.8){\line(0,1){0.4}}
\put(3.8,10){\line(1,0){0.4}}
\put(6.25,3.2){$n$}
\put(9.4,2.8){$2n$}
\put(2.7,9.3){$2n$}
\put(2.9,0.9){0}
\put(1.5,2.5){0}
\put(2,7){0}
\put(8.9,0.9){0}
\put(7.7,4.6){0}
\put(9.4,7){0}
\end{picture}
\end{center}
Property (Rigidity)
implies that $E_1^{n,n}(X,n)=Z^n(X,K)$.
As only one differential goes to $E_r^{n,n}$,
we obtain a sequence of epimorphisms:
$$
Z^n(X,K)=E_1^{n,n}(X,n) \rightarrow E_2^{n,n}(X,n) \rightarrow E_3^{n,n}(X,n)
 \rightarrow \hdots
$$
which become isomorphisms as soon as $r>n$.
Thus, if we put 
$$
\K_{(r)}^n(X)=\mathrm{Ker}(E_1^{n,n}(X,n) \rightarrow E_{r+1}^{n,n}(X,n)),
$$
we obtain an increasing filtration on $Z^n(X,K)$:
\begin{equation} \label{eq:equiv_rel_H}
 \K_{(1)}^n(X) \subset \K_{(2)}^n(X)
 \subset \hdots \subset \K_{(n)}^n(X)\subset Z^n(X,K)
\end{equation}
such that $E_r^{n,n}(X,n)=Z^n(X,K)/\K_{(r-1)}^n(X)$.

Note also that $E_n^{n,n}=E_\infty^{n,n}$ is the first step
of the coniveau filtration on $H^{2n}(X,n)$ so that we get a monomorphism
$$
\epsilon:E_n^{n,n}(X,n) \rightarrow H^{2n,n}(X).
$$
Note these considerations can be applied to the functor
 $\Hom_{\dmgm}(.,K)$ corresponding to $K$-rational motivic cohomology.
 In this case, according to Example \ref{ex:motivic&Milnor},
 the $E_r^{n,n}=CH^n(X)_K=H^{2n}_{\mathcal M}(X;K(n))$.

Returning to the general case,
 the natural transformation $\rho$ induces
 a morphism of the coniveau spectral sequences.
 This induces the following commutative diagram:
\begin{equation} \label{eq:ultim_descr_coniv_ssp}
\begin{split}
\xymatrix@R=6pt{
& CH^n(X)_K\ar@{=}[r]\ar@{->>}^{\tilde \rho^n_X}[dd]
 & CH^n(X)_K\ar^-\sim[r]\ar@{->>}[dd]
  & H^{2n}_{\mathcal M}(X;K(n))\ar^{\rho^n_X}[dd] \\
Z^n(X,K)\ar@{->>}[ru]\ar@{->>}[rd] &&& \\
&E_2^{n,n}(X,n)\ar@{->>}[r] & E_n^{n,n}(X,n)\ar@{^(->}^\epsilon[r] & H^{2n,n}(X)
}
\end{split}
\end{equation}
\end{num}

The following proposition is a generalization of a result
of Bloch-Ogus (cf \cite[(7.4)]{BO}).
\begin{prop} \label{Bloch-Ogus}
Consider the preceding hypothesis and notations.
Then the following properties hold:
\begin{enumerate}
\item[(i)] For any scheme $X$ and any integer $n \in \NN$,
 $\K_{rat}^n(X) \subset \K_{(1)}^n(X)$.
\item[(ii)]  For any scheme $X$ and any integer $n \in \NN$,
 $\K_{(n)}^n(X)=\K_H^n(X)$.
\end{enumerate}
Moreover, the following conditions are equivalent~:
\begin{enumerate}
\item[(iii)] For any smooth proper scheme $X$, 
 $\K^1_H(X)=\K^1_{alg}(X)$.
\item[(iii')]
For any smooth proper scheme $X$ and any $n \in \NN$,
 $\K^n_{(1)}(X)=\K_{alg}^n(X)$.
\end{enumerate}
\end{prop}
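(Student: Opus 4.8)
The entire argument can be read off the commutative diagram \eqref{eq:ultim_descr_coniv_ssp} and the edge maps of the spectral sequence. For (i), the quotient $Z^n(X,K)\twoheadrightarrow E_2^{n,n}(X,n)$ factors, by \eqref{eq:ultim_descr_coniv_ssp}, through $Z^n(X,K)\twoheadrightarrow CH^n(X)_K\xrightarrow{\tilde\rho^n_X}E_2^{n,n}(X,n)$, so its kernel $\K^n_{(1)}(X)$ contains $\K^n_{rat}(X)$, the kernel of the first map. For (ii), as recalled before the statement the maps $E_r^{n,n}\to E_{r+1}^{n,n}$ are isomorphisms for $r>n$, so $E_{n+1}^{n,n}(X,n)=E_\infty^{n,n}(X,n)$ and $\K^n_{(n)}(X)=\ker\big(Z^n(X,K)\twoheadrightarrow E_\infty^{n,n}(X,n)\big)$. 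Since $\epsilon$ is a monomorphism this kernel is unchanged after composing with $\epsilon$, and the right-hand column of \eqref{eq:ultim_descr_coniv_ssp} identifies that composite with the cycle class map $\rho^n_X$; its kernel is $\K^n_H(X)$ by the very definition of $H$-equivalence, whence $\K^n_{(n)}(X)=\K^n_H(X)$.

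For the equivalence, (iii')$\Rightarrow$(iii) is formal: taking $n=1$ in (ii) gives $\K^1_{(1)}(X)=\K^1_H(X)$, so the case $n=1$ of (iii') is precisely (iii). For the converse I would first restate (iii), again via (ii) at $n=1$, as the equality $\K^1_{(1)}(X)=\K^1_{alg}(X)$ for every smooth proper $X$, and then prove $\K^n_{(1)}(X)=\K^n_{alg}(X)$ for all $n$ by two inclusions. For $\K^n_{alg}(X)\subset\K^n_{(1)}(X)$ I would use that $\K^n_{alg}(X)$ is generated by cycles $\gamma_*([t]-[t'])$, with $C$ a smooth proper curve, $[t]-[t']$ a degree-$0$ divisor on $C$, and $\gamma$ a correspondence from $C$ to $X$. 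By (iii) applied to $C$ one has $[t]-[t']\in\K^1_{alg}(C)=\K^1_{(1)}(C)$, so this divisor dies in $E_2^{1,1}(C,1)$; since $\gamma$ acts on Rost's cycle complexes through the proper pushforward, flat pullback and module structures of \ref{funct_Chow}, it commutes with the edge quotients $Z\twoheadrightarrow E_2$, and therefore $\gamma_*([t]-[t'])$ dies in $E_2^{n,n}(X,n)$, i.e. lies in $\K^n_{(1)}(X)$.

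For the reverse inclusion $\K^n_{(1)}(X)\subset\K^n_{alg}(X)$ I would use Proposition \ref{compute_differentials}. As $\K^n_{(1)}(X)=\mathrm{Im}\,d_1^{n-1,n}$, that proposition identifies the component of $d_1^{n-1,n}$ indexed by a point $x\in X^{(n-1)}$ with the divisor map $d^0$ of the cycle module $\hat H^{n,n}_*$ on the closure $W=\overline{\{x\}}$, followed by the pushforward along $W\hookrightarrow X$. Thus every generator of $\K^n_{(1)}(X)$ is the proper pushforward of a residue divisor $d^0_W(\beta)$ attached to some $\beta\in\hat H^{1,1}(\kappa(x))$. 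Choosing a smooth proper model $\tilde W$ of the function field $\kappa(x)$, the corresponding residue divisor lies in $\mathrm{Im}\,d^0_{\tilde W}=\K^1_{(1)}(\tilde W)=\K^1_{alg}(\tilde W)$, the last equality being (iii) for $\tilde W$; pushing forward to $X$ and using that proper pushforward preserves algebraic equivalence returns the cycle inside $\K^n_{alg}(X)$.

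The step I expect to be the genuine obstacle is this reduction to a smooth proper model. The subvarieties $W=\overline{\{x\}}$ are typically singular, so invoking (iii) --- available only for smooth proper schemes --- forces me to replace $W$ by a smooth proper model and to check that the residue divisor, the divisor map $d^0$ and the resulting class modulo algebraic equivalence are all compatible with this proper birational modification and with the pushforward to $X$. That compatibility is exactly the functoriality of Rost's cycle complex for proper morphisms (\ref{funct_Chow}) together with the functoriality of residues under Gysin morphisms established in \cite{Deg6}; the model itself exists by resolution of singularities in characteristic $0$ and by de Jong alterations in characteristic $p$, the generic degree appearing in the latter case being harmless because the coefficients are $K$-linear.
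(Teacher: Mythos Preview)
Your argument is correct and follows the same route as the paper's own proof: (i) and (ii) are read off the diagram \eqref{eq:ultim_descr_coniv_ssp}; the inclusion $\K^n_{alg}\subset\K^n_{(1)}$ is reduced via the functoriality of Rost's complexes (\ref{funct_Chow}) to the case $n=1$ on a smooth proper curve, and the reverse inclusion is reduced, via a de~Jong alteration of the closure $\overline{\{x\}}$ and surjectivity of the norm $N_{L/E}$ (this is where the $K$-linearity you mention is used), to the same codimension-one case. The paper writes out the two commutative diagrams in $C^*(-;\hat H^{q,n}_*)$ explicitly, but the content is identical; one minor point is that the compatibility you need in the alteration step is just the proper-pushforward functoriality of Rost's cycle complex from \cite{Ros}, not the Gysin compatibilities of \cite{Deg6}.
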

Note that under the equivalent conditions (iii) and (iii'),
 the morphism $\tilde \rho^n_X$ induces,
  according to \eqref{coniv_spectral3}, an isomorphism:
\begin{equation} \label{eq:cycles_mod_alg&unramified}
A^n(X)_K \xrightarrow{\ \sim\ } H^n_\mathrm{Zar}\big(X;\cH^{n,n}\big)
\end{equation}
 where $\cH^{n,n}$ is the Zariski sheaf associated
 with $H^{n,n}$ -- apply propositions \ref{prop:coniv_spectral3}
  and \ref{prop:comput_unramified_sheaf}.
\begin{proof}
Properties (i) and (ii) are immediate consequences
 of \eqref{eq:ultim_descr_coniv_ssp}.

Note that, for $n=0$, condition (iii') always holds.
Obviously (iii) implies (iii') according to assertion (ii).
Thus it remains to prove that (iii) implies (iii') when $n>1$.

Fix an integer $n>1$.
We first prove the inclusion $\K^n_{alg}(X) \subset \K^n_{(1)}(X)$.
Consider cycles $\alpha, \beta \in Z^n(X,K)$ such that $\alpha$ is
algebraically equivalent to $\beta$.
This means
there exists a smooth proper connected curve $C$,
 points $x_0,x_1 \in C(k)$,
  and a cycle $\gamma$ in $Z^n(X \times C,K)$ such that
  $f_*(g^*(x_0).\gamma)=\alpha$, $f_*(g^*(x_1).\gamma)=\beta$
where $f:X \times C \rightarrow X$ and $g:X \times C \rightarrow X$
are the canonical projections. Using the functoriality described in
paragraph \ref{funct_Chow} applied to the morphism of cycle modules 
 $K_{*}^M \rightarrow \hat H_*^{0,0}$ (Remark \ref{rem:symbol}),
 we get a commutative diagram
$$
\xymatrix@C=16pt@R=14pt{
A^1(C;K_*^M)_K\ar^-{q^*}[r]\ar_{(1)}[d]
 & A^1(C \times X;K_*^M)_K\ar^-{.\gamma}[r]\ar[d]
  & A^{p+1}(C \times X;K_*^M)_K\ar^-{f_*}[r]\ar[d]
   & A^n(X;K_*^M)_K\ar^{(2)}[d] \\
A^1(C;\hat H_*^{0,0})\ar^-{q^*}[r]
 & A^1(C \times X;\hat H_*^{0,0})\ar^-{.\gamma}[r]
  & A^{p+1}(C \times X;\hat H_*^{0,0})\ar^-{f_*}[r]
   & A^n(X;\hat H_*^{0,0}) \\
}
$$
Recall the identifications:
$$
A^n(X;K_*^M)_n=CH^n(X), \quad
 A^n(X;\hat H_*^{0,0})_n=A^n(X;\hat H_*^{n,n})_0=E_2^{n,n}(X,n).
$$
According to these ones, the first (resp. $n$-th) graded piece of the map (1)
(resp. (2)) can be identified with the morphism 
 $\tilde \rho^1_X$ (resp. $\tilde \rho^n_X$).
In particular, we are reduced to prove that $x_0-x_1$ belongs to $\K^1_{(1)}(C)$.
This finally follows from (iii).

We prove conversely that $\K^n_{(1)}(X) \subset \K^n_{alg}(X)$.
Recall $A^n(X;\hat H_*^{n,n})_0$
is the cokernel of the differential \eqref{eq:diff_modl}
$$
C^{n-1}(X;\hat H_*^{n,n})_0
 \xrightarrow{d^{n-1}} C^n(X;\hat H_*^{n,n})_0=Z^n(X,K).
$$
We have to prove that the image of
this map consists of the cycles algebraically equivalent to zero.
Consider a point $y \in X^{(p-1)}$ with residue field $E$
 and an element $\rho \in \bar H^{1,1}(E)$. 
Let $i:Y \rightarrow X$ be the immersion  of the reduced closure of $y$ in $X$.
Using De Jong's theorem,
 we can consider an alteration $Y' \xrightarrow f Y$
 such that $Y'$ is smooth over $k$.
Let $\varphi:E \rightarrow L$ be the extension of function 
fields associated with $f$.
According to the basic functoriality of cycle modules
 recalled in Paragraph \ref{funct_Chow},
 we obtain a commutative diagram
$$
\xymatrix@R=8pt@C=16pt{
\bar H^{1,1}(L)\ar@{=}[r]\ar_{N_{L/E}}[d]
 & C^0(Y';\hat H_*^{1,1})_0\ar^-{d^1_{Y'}}[r]\ar[d]
 & C^1(Y';\hat H_*^{1,1})_0\ar@{=}[r]\ar[d]
 & Z^1(Y')\ar^{f_*}[d] \\
\bar H^{1,1}(E)\ar@{=}[r]
 & C^0(Y;\hat H_*^{1,1})_0\ar^-{d^1_{Y}}[r]\ar@{^(->}[d]
 & C^1(Y;\hat H_*^{1,1})_0\ar@{=}[r]\ar[d]
 & Z^1(Y)\ar^{i_*}[d] \\
& C^{n-1}(X;\hat H_*^{n,n})_0\ar^-{d^{n-1}_{X}}[r]
 & C^n(X;\hat H_*^{n,n})_0\ar@{=}[r]
 & Z^n(X) \\
}
$$
where $f_*$ and $i_*$ are the usual proper pushouts on cycles.
Recall from \cite[(R2d)]{Ros} that
 $N_{L/E} \circ \varphi_*=[L:E].Id$ for the cycle module $\hat H_*^{1,1}$.
Thus, $N_{L/E}$ is surjective.
As algebraically equivalent cycles are stable by direct images
of cycles, we are reduced to the case of the scheme $Y'$, 
in codimension $1$, already obtained above.
\end{proof}

\begin{rem}
In the preceding proof,
 if we can replace the alteration $f$ by a 
 (proper birational) resolution of singularities, 
then the theorem is true with integral coefficients
 -- indeed, the extension $L/E$ which shows up in the
 end of the proof is trivial when $f$ is birational.
This holds in characteristic $0$ by Hironaka's resolution
 of singularities but also in characteristic $p>0$
 if $X$ is a curve, a surface (cf \cite{Lipman}) 
 or a 3-fold (cf \cite{CP}).
\end{rem}

\begin{num}
We consider the assumptions and notations of Example \ref{ex:MWC}.
An important property of a mixed Weil theory is the fact the graded functor:
$$
H^*_E:\dmgm^{op} \rightarrow (K-vs)^\ZZ, M \mapsto \big(H^n_E(M),\ n \in \ZZ \big)
$$
is monoidal where the target category is the monoidal category
 of $\ZZ$-graded vector spaces.

Recall from \cite[Prop. 2.18]{Deg6} that for any smooth projective scheme $X$
 of pure dimension $d$,
 there exists a strong duality pairing 
 $\eta:M(X) \otimes M(X)(-d)[-2d] \rightarrow \ZZ$.
 Applying to this pairing the monoidal functor $H^*_E$,
  we get the usual Poincar\'e duality pairing:
$$
H^{q,n}_E(X) \otimes H^{2d-q,d-n}_E(X) \rightarrow K.
$$
As in the above, the regulator map induces a morphism
 of the unramified sheaves $\cH_M^{q,n} \rightarrow \cH^{q,n}_E$
 which induces an "unramified" regulator:
$$
\tilde \rho_X:CH^n(X) \simeq H^n_\mathrm{Zar}(X,\cH^{n,n}_\M)
 \rightarrow H^n_\mathrm{Zar}(X,\cH^{n,n}_E)
$$
As a corollary of the preceding proposition,
 we get the following result:
\end{num}
\begin{cor} \label{Bloch-Ogus_algebraic}
Consider the notations above.
For any pair of integers $(q,n)$,
 let $\cH^{q,n}_E$ be the Zariski sheaf on the category of smooth schemes
 associated with $H^{q,n}_E$.

Assume the realization functor $H_E$
 satisfies property (Vanishing) (\textsection \ref{num:axioms_realization}).
Then, the following conditions are equivalent~:
\begin{enumerate}
\item[(i)] The realization functor $H_E$
 satisfies property (Rigidity) (\textsection \ref{num:axioms_realization}).
\item[(ii)] For any integer $n \in \NN$
 and any projective smooth scheme $X$,
 the unramified regulator map $\tilde \rho_X$ considered above
 induces an isomorphism
$$
A^n(X)_K \rightarrow H^n_{\mathrm{Zar}}(X;\cH^{n,n}_E).
$$
\end{enumerate}
\end{cor}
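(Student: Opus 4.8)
The plan is to treat the two implications by different means: \mbox{(i) $\Rightarrow$ (ii)} will be deduced from Proposition \ref{Bloch-Ogus}, whereas \mbox{(ii) $\Rightarrow$ (i)} must be argued from property (Vanishing) alone, since Proposition \ref{Bloch-Ogus} already presupposes (Rigidity). For \mbox{(i) $\Rightarrow$ (ii)} I would first assume (Rigidity), so that $\hat H^{0,0}_{E,0}(\kappa(x)) = K$ for every point $x$ and hence $E_1^{n,n}(X,n) = Z^n(X,K)$. By Proposition \ref{prop:coniv_spectral3} one then has $E_2^{n,n}(X,n) = Z^n(X,K)/\K^n_{(1)}(X) = H^n_\mathrm{Zar}(X;\cH^{n,n}_E)$, and $\tilde\rho^n_X$ is the quotient map $Z^n(X,K)/\K^n_{rat}(X) \to Z^n(X,K)/\K^n_{(1)}(X)$, legitimate by Proposition \ref{Bloch-Ogus}(i). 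It descends to $A^n(X)_K = Z^n(X,K)/\K^n_{alg}(X)$ and is an isomorphism precisely when $\K^n_{alg}(X) = \K^n_{(1)}(X)$, which is condition (iii') of Proposition \ref{Bloch-Ogus}. Thus condition (ii) of the corollary is exactly the isomorphism \eqref{eq:cycles_mod_alg&unramified}, and Proposition \ref{Bloch-Ogus} reduces it to condition (iii) there; it remains only to verify that $\K^1_H(X) = \K^1_{alg}(X)$ for every smooth proper $X$.

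This divisor statement is the geometric heart of the argument, and I expect it to be the main obstacle. The point is that $H_E$ restricts to a classical Weil cohomology on smooth projective schemes, equipped with the Poincar\'e duality pairing of Example \ref{ex:MWC}, so that the cycle class $\rho^1_X : CH^1(X)_K = \mathrm{Pic}(X)_K \to H^{2,1}_E(X)$ is the first Chern class. Its kernel is exactly the subgroup of divisors algebraically equivalent to zero: algebraically trivial divisors are homologically trivial, the class being constant on the connected family parametrised by $\mathrm{Pic}^0(X)$, while conversely homological equivalence is squeezed between algebraic and numerical equivalence, and these two coincide rationally for divisors by Matsusaka's theorem together with the non-degeneracy of the duality pairing. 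This yields $\K^1_H(X) = \K^1_{alg}(X)$ and hence (ii).

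For \mbox{(ii) $\Rightarrow$ (i)} I would use only the case $n = 0$, exploiting (Vanishing) to kill the residues leaving degree $0$. Indeed, by Remark \ref{rem:relation_generic} and formula \eqref{Rost_grading_unramified_cycle_module} one has $\hat H^{0,0}_{E,-1}(\kappa) = \hat H^{-1,-1}_{E,0}(\kappa) = 0$, since $-1 < 0$. Consequently $C^1(X;\hat H^{0,0}_{E,*})_0 = 0$, so for a connected smooth $X$ the unramified condition is vacuous and $H^0_\mathrm{Zar}(X;\cH^{0,0}_E) = A^0(X;\hat H^{0,0}_{E,*})_0 = \hat H^{0,0}_{E,0}(k(X))$. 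As $A^0(X)_K = K$ for connected $X$, the isomorphism of (ii) for $n=0$ forces $\hat H^{0,0}_{E,0}(L) = K$ for every function field $L = k(X)$ that admits a smooth projective model.

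Finally I would promote this to arbitrary function fields by alterations. Given any $L/k$, de Jong's theorem furnishes a smooth projective model $X'$ of a finite extension $L'/L$. The relation $N_{L'/L} \circ \varphi_* = [L':L]\cdot\mathrm{id}$ from \cite[(R2d)]{Ros}, together with the fact that $K$ has characteristic zero so that $[L':L]$ is invertible, shows that the corestriction $\varphi_* : \hat H^{0,0}_{E,0}(L) \to \hat H^{0,0}_{E,0}(L') = K$ is split injective; as it preserves the nonzero unit, $\hat H^{0,0}_{E,0}(L) = K$. Since the corestrictions are then unit-preserving maps between one-dimensional spaces, they are identities, so $\hat H^{0,0}_{E,0}$ is the constant functor with value $K$, which is exactly (Rigidity).
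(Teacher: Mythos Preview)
Your argument for (i) $\Rightarrow$ (ii) is essentially the paper's: reduce via Proposition \ref{Bloch-Ogus} to the divisor case (iii), then invoke Poincar\'e duality and Matsusaka's theorem to identify homological and algebraic equivalence for divisors. This is correct and matches the paper closely.

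For (ii) $\Rightarrow$ (i), however, you take a genuinely different and cleaner route than the paper. The paper argues through the \emph{top} codimension: for $X$ smooth projective connected of dimension $d$, it uses the coniveau spectral sequence together with Poincar\'e duality to identify $E_2^{d,d}(X,d)$ with $H^{2d,d}_E(X) \simeq H^{0,0}_E(X)$, and then (ii) in degree $n=d$ forces $H^0(X,\E)=K$, whence $\hat H^{0,0}_0(k(X))=K$. You instead use $n=0$ directly: (Vanishing) kills $C^1(X;\hat H^{0,0}_*)_0$, so $H^0_\mathrm{Zar}(X;\cH^{0,0}_E)=\hat H^{0,0}_0(k(X))$, and (ii) for $n=0$ immediately gives $\hat H^{0,0}_0(k(X))=K$. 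Both proofs then finish the same way, using de~Jong's alterations and the projection formula (R2d) to extend to arbitrary function fields. Your approach has the advantage of avoiding Poincar\'e duality entirely in this direction; the paper's approach, by contrast, makes the role of the duality pairing explicit. Note that both arguments tacitly require $[L':L]$ to be invertible in $K$, which is satisfied for coefficient fields of characteristic zero as is standard for mixed Weil theories.
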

\begin{proof}
Remark the assumption implies that for any smooth scheme $X$ and
 any $i<0$, $H^i(X,\E)=0$ -- apply the coniveau spectral sequence for $X$.

$(i) \Rightarrow (ii)$~: According to our hypothesis,
 we can apply Proposition \ref{Bloch-Ogus} to the realization functor $H_E$.
Indeed, we have assumed (Vanishing) and (Rigidity).
Moreover, Property (Rigidity) and the Poincar\'e duality
pairing implies that for any smooth projective connected curve
$p:C \rightarrow \spec k$, the morphism 
$p_*:H^2(C,\E)(1) \rightarrow H^0(C,\E)=K$ is an isomorphism.
Following classical arguments, this together with the multiplicativity
of the cycle class map implies
that homological equivalence for $\E$
is between rational and numerical equivalence.
From Matsusaka's theorem (cf \cite{Mat}), 
these two equivalences coincide for divisors. 
This implies assumption (iii) of Proposition \ref{Bloch-Ogus},
 and we can conclude from the isomorphism \eqref{eq:cycles_mod_alg&unramified}.

$(ii) \Rightarrow (i)$~: 
For a $d$-dimensional smooth projective connected scheme $X$, 
we deduce from the coniveau spectral sequence and Poincar\'e
duality  
that $E_2^{d,d}(X,d)=H^{2d}(X,\E)(d)=H^0(X,\E)$.
Thus property (ii) implies $H^0(X,\E)=K$. If $L$ is the function field of
$X$, we deduce that $\bar H^0(L,\E)=K$.
Considering any function field $E$, we easily construct 
an integral projective scheme $X$ over $k$ with function
field $E$. Applying De Jong's theorem, we find an alteration 
$\tilde X \rightarrow X$ such that $\tilde X$ is projective 
smooth and the function field $L$ of $\tilde X$ is a finite 
extension of $E$ and the result now follows from the fact
$N_{L/E}:\bar H^0(L) \rightarrow \bar H^0(E)$ is a split
epimorphism.
\end{proof}

\begin{rem}
\begin{enumerate}
\item Condition (i) in the previous corollary is only reasonable
when the base field $k$ is separably closed (or after an extension
to the separable closure of $k$).
\item When $k$ is the field of complex numbers
 and $H$ is algebraic De Rham cohomology,
 the filtration on cycles \eqref{eq:equiv_rel_H}
 is usually called the \emph{Bloch-Ogus filtration} -- see \cite{Fri}.
 It can be compared with other filtrations
  (see \cite{Nori}, \cite{Fri}).
 It is an interesting question whether a similar comparison
 to that of \cite[rem. 5.4]{Nori} can be obtained in the case of rigid cohomology.
\end{enumerate}
\end{rem}

\bibliographystyle{alpha}
\bibliography{gysin}

\end{document}